\renewcommand{\epsilon}{\varepsilon}
\newtheorem{theorem}{Theorem}[section]
\newtheorem{proposition}[theorem]{Proposition}
\newtheorem{corollary}[theorem]{Corollary}
\newtheorem{lemma}[theorem]{Lemma}
\newtheorem{conjecture}[theorem]{Conjecture}
\theoremstyle{definition}
\newtheorem{definition}[theorem]{Definition}
\newtheorem{question}[theorem]{Question}
\newtheorem*{question*}{Question}
\theoremstyle{remark}
\newtheorem{remark}[theorem]{Remark}
\DeclareMathOperator{\cd}{cd}
\DeclareMathOperator{\gd}{gd}
\newcommand{\FF}{\mathfrak F}
\newcommand{\GG}{\mathfrak G}
\newcommand{\Vc}{\mathfrak{Vc}}
\newcommand{\mS}{\mathcal S}
\newcommand{\M}{\mathrm{Mod}}
\newcommand{\Z}{\mathbb Z}
\newcommand{\N}{\mathbb N}
\newcommand{\R}{\mathbb R}
\newcommand{\finfty}{{\operatorname{F}}_{\infty}}
\newcommand{\UUF}{\underline{\underline{\operatorname{F}}}}
\newcommand{\UF}{\underline{\operatorname{F}}}
\newcommand{\uucd}{\mathop{\underline{\underline\cd}}}
\newcommand{\uugd}{\mathop{\underline{\underline\gd}}}
\newcommand{\ugd}{\mathop{\underline\gd}}
\newcommand{\Ho}{\operatorname{H}}
\newcommand{\cohom}[3]{H^{{\raise1pt\hbox{$\scriptstyle#1$}}}(#2\>\!,#3)}
\newcommand{\tatecohom}[3]%
  {\widehat H^{{\raise1pt\hbox{$\scriptstyle#1$}}}(#2\>\!,#3)}
\newcommand{\Cohom}[3]%
  {H^{{\raise1pt\hbox{$\scriptstyle#1$}}}\big(#2\>\!,#3\big)}
\newcommand{\Tatecohom}[3]%
  {\widehat H^{{\raise1pt\hbox{$\scriptstyle#1$}}}\big(#2\>\!,#3\big)}
\newcommand{\homol}[3]{H_{{\lower1pt\hbox{$\scriptstyle#1$}}}(#2\>\!,#3)}
\newcommand{\homolog}[2]{H_{{\lower1pt\hbox{$\scriptstyle#1$}}}(#2)}
\renewcommand{\ker}{\operatorname{Ker}}
\newcommand{\eg}{{\underline EG}}
\newcommand{\uueg}{{\underline{\underline E}}G}
\newcommand{\EFG}{E_{\frak F}G}
\newcommand{\EFiG}{E_{{\frak{F}}_i}G}
\newcommand{\EvcG}{E_{\Vc}G}
\newcommand{\ue}{{\underline E}}
\newcommand{\Hom}{\operatorname{Hom}}
\newcommand{\OFG}{\mathcal O_{\mathcal F}G}
\newcommand{\OVC}{\mathcal O_{\mathfrak{Vc}}}
\newcommand{\bs}{\smallsetminus}
\newcommand{\frakF}{\mathfrak{F}}
\newcommand{\frakG}{\mathfrak{G}}
\newcommand{\calO}{\mathcal O}
\newcommand{\ModOFG}{\mathop{{\operator@font
Mod\text{-}}\calO_{\frakF}G}}
\newcommand{\OFGMod}{\mathop{\calO_{\frakF}G\text{-}{\operator@font
Mod}}}
\newcommand{\ModOGG}{\mathop{{\operator@font
Mod\text{-}}\calO_{\frakG}G}}
\newcommand{\OGGMod}{\mathop{\calO_{\frakG}G\text{-}{\operator@font
Mod}}}
\newcommand{\Fall}{\frakF_{\operator@font all}}
\newcommand{\Ffin}{\frakF_{\operator@font fin}}
\newcommand{\Fvc}{\frakF_{\operator@font vc}}
\newcommand{\Fic}{\frakF_{\operator@font ic}}
\newcommand{\Ffg}{\frakF_{\operator@font fg}}
\newcommand{\Fpc}{\frakF_{\operator@font pc}}
\newcommand{\Fab}{\frakF_{\operator@font ab}}
\newcommand{\Fvpc}{\frakF_{\operator@font vpc}}
\newcommand{\Fvab}{\frakF_{\operator@font vab}}
\newcommand{\OC}[2]{\mathop{\mathcal{O}_{#2}#1}\nolimits}
\renewcommand{\OFG}{\OC{G}{\frakF}}
\DeclareMathOperator{\vcd}{vcd}
\newcommand{\ucd}{\mathop{\underline\cd}}
\renewcommand{\coprod}%
{\mathop{\rotatebox[origin=c]{180}{$\displaystyle\prod$}}\limits}
\numberwithin{equation}{section}
\begin{document}

\title[ classifying spaces for mapping class groups]{hierarchically cocompact classifying spaces for mapping class groups of surfaces}


\author{Brita Nucinkis}
\author{Nansen Petrosyan}
\address{Department of Mathematics, Royal holloway, University of London, Egham, TW20 0EX, UNITED KINGDOM}
\email{brita.nucinkis@rhul.ac.uk}
\address{School of Mathematics, University of Southampton, Southampton, SO17 1BJ, UNITED KINGDOM}
\email{N.Petrosyan@soton.ac.uk}

\curraddr{}

\curraddr{}

\thanks{The  second author was supported by the EPSRC First Grant EP/N033787/1.}


\subjclass[2010]{20J06, 55R35}

\date{\today}

\dedicatory{Dedicated to Peter Kropholler on the occasion of his 60th birthday.}

\keywords{}

\begin{abstract} We define the notion of {\it a hierarchically cocompact} classifying space  for a family of subgroups of a group. Our main application is to show that the mapping class group $\M(S)$ of any  connected oriented compact surface $S,$ possibly  with punctures and boundary components and with negative Euler characteristic has a hierarchically  cocompact model for the family of virtually cyclic subgroups of  dimension at most $\vcd \M(S)+1$. When the surface is closed, we prove that this bound is optimal.  In particular, this answers a question of L\"{u}ck for mapping class groups of surfaces.
\end{abstract}

\maketitle

\section{Introduction}

Let $G$ be a group and denote by $\FF$ a family of subgroups of $G$, that is a collection of subgroups of $G$ closed under conjugation and finite intersection.
We denote by $\EFG$ the classifying space for the family $\FF$. A $G$-CW-complex $X$ is said to be a {\it model} for $\EFG$ if $X^H$ is contractible for all $H \in \FF,$ and $X^H$ is empty otherwise. The minimal dimension of a model for $\EFG,$ denoted by $\gd_{\FF}G,$ is called the {\it geometric dimension of $G$ for the family $\FF$}.

\begin{definition}\label{defwF} We say a $G$-CW-complex $X$ is a {\it hierarchically  cocompact (hierarchically finite type)} model for $\EFG$ if there are families $\FF_i$ ($i\in I$ some indexing set) such that
$\FF = \bigcup_{i\in I} \FF_i,$ and that there are cocompact (finite type) models for $\EFiG$ for all $i \in I.$
\end{definition}

By the universal property for classifying spaces for families, this is equivalent to saying that there is a model for $\EFG$ which is a mapping telescope of cocompact (finite type) models for $\EFiG.$ This follows from an argument analogous to \cite[Theorem 6.11]{martineznucinkis11}.

Definition \ref{defwF} was motivated by the following conjecture of Juan-Pineda and Leary.

\begin{conjecture}[Juan-Pineda--Leary, \cite{jpl}]\label{jpl-conj} Denote by $\Vc$ the family of virtually cyclic subgroups of a group G, and let $G$ be a group admitting a cocompact model for $\EvcG.$ Then $G$ is virtually cyclic.
\end{conjecture}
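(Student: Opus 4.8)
The plan is to show that the existence of a cocompact model for $\EvcG$ imposes finiteness conditions so restrictive that they can be met only when $G$ is virtually cyclic, and then to isolate precisely where the argument needs \emph{cocompactness} rather than the weaker hierarchical cocompactness of Definition~\ref{defwF}. First, the finiteness reductions. If $X$ is a cocompact model for $\EvcG$, then $X$ is a finite-dimensional contractible $G$-CW-complex with finitely many orbits of cells, all cell stabilisers lying in $\Vc$. Since every virtually cyclic group is of type $\finfty$, the standard machinery for groups acting cocompactly on contractible complexes with stabilisers of type $\finfty$ (Brown's criterion together with the equivariant spectral sequence of the skeletal filtration) shows that $G$ itself is of type $\finfty$; and of course $\uugd G<\infty$ since the given model is finite-dimensional. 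In particular every ``monster'' counterexample --- finitely generated groups that are not finitely presented, Tarski monsters, and so on --- is eliminated at the outset, and we may assume $G$ is of type $\finfty$.

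Next I would feed this into the L\"uck--Weiermann pushout, which constructs $\EvcG$ from $\eg$ by attaching, for each commensurability class $[V]$ of infinite virtually cyclic subgroups of $G$, a cone on a copy of $\underline{E}(N_G[V])$ sitting inside a model for the classifying space of the commensurator $N_G[V]$ relative to the family generated by its finite subgroups and by $V$. Since in the double mapping cylinder no cells cancel, cocompactness of $\EvcG$ forces (detectably, e.g.\ through Bredon cohomology) that there be only finitely many $G$-conjugacy classes of such $[V]$ and that each commensurator carry a cocompact relative model. The heart of the matter is therefore the contrapositive: \emph{if $G$ is of type $\finfty$ and is not virtually cyclic, then it has infinitely many $G$-conjugacy classes of commensurability classes of infinite virtually cyclic subgroups, or some $N_G[V]$ fails to admit the required cocompact relative model.} This is exactly what the known partial results establish in restricted settings: for a group containing $\Z^2$ one writes down by hand infinitely many pairwise non-commensurable, non-conjugate infinite cyclic subgroups; for hyperbolic, $\catzero$, or acylindrically hyperbolic $G$ one uses the almost malnormality of elementary subgroups and ping-pong with loxodromic elements (this is the Juan--Pineda--Leary theorem and its extensions); for elementary amenable $G$ one invokes the structure theory. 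A uniform attack would try to manufacture such a ping-pong argument from an action of $G$ on a suitable ``hierarchical'' space.

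The hard part --- and the reason I would not expect to settle the conjecture unconditionally by these methods --- is that there is no uniform source of negative curvature, or of any control on the lattice of virtually cyclic subgroups, for an arbitrary group of type $\finfty$ with finite $\uugd$; this is exactly the gap that Definition~\ref{defwF} is designed to expose. The mapping class group of a closed surface is not virtually cyclic, has finite $\vcd$, and (by the main theorem of this paper) admits a \emph{hierarchically} cocompact model for $\EvcG$: it has infinitely many commensurability classes of maximal infinite virtually cyclic subgroups, organised into a countable exhaustion $\Vc=\bigcup_i\FF_i$ with each $\FF_i$ cocompactly modelled but no single $\FF_i$ equal to $\Vc$. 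Consequently any proof of Conjecture~\ref{jpl-conj} must use cocompactness in an essential, non-approximable way --- for instance through the single finite quotient $G\backslash\EvcG$ and an Euler-characteristic or transfer argument --- rather than through any exhaustion, and the realistic target with the present toolkit is to prove the conjecture under an extra hypothesis guaranteeing an acylindrical action on a hyperbolic-like space.
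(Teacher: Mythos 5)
This statement is a \emph{conjecture} --- the paper does not prove it, and it remains open in general; the paper only cites the classes of groups (hyperbolic, elementary amenable, one-relator, CAT(0), acylindrically hyperbolic, $3$-manifold, linear groups) for which it is known, and uses it as motivation for the weaker notion of hierarchical cocompactness. Your proposal, to its credit, is honest about this: after the preliminary reductions you state explicitly that you do not expect to settle the conjecture by these methods. But that means what you have written is a survey of partial results plus a strategy, not a proof. The ``heart of the matter'' you isolate --- that every group of type $\finfty$ which is not virtually cyclic must either have infinitely many $G$-classes of commensurability classes of infinite virtually cyclic subgroups or have some commensurator failing the relevant cocompactness --- is not a lemma you can appeal to; it is essentially a restatement of the conjecture itself, and the only evidence you offer for it is the list of known special cases, each of which uses structure (almost malnormality of elementary subgroups, ping-pong with loxodromics, elementary amenable structure theory) that an arbitrary group of type $\finfty$ simply need not have. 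So the gap is not a technical step that can be patched; it is the entire content of the statement.

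Two smaller points on the reductions you do make. First, the standard finiteness consequence of a cocompact model is usually extracted more directly than via the L\"uck--Weiermann pushout: if $X$ is cocompact then any virtually cyclic $H$ has nonempty (indeed contractible) fixed-point set, hence fixes a cell of one of the finitely many orbits, so $G$ has finitely many virtually cyclic subgroups $H_1,\dots,H_n$ with every virtually cyclic subgroup subconjugate to some $H_i$ --- this is the ``type $\UUF_0$'' condition quoted in the introduction of the paper, and it is exactly what the known partial results contradict. Second, your claim that cocompactness of $\EvcG$ ``forces each commensurator to carry a cocompact relative model'' because ``no cells cancel in the double mapping cylinder'' is not justified as stated: the pushout of Proposition \ref{lw-main} builds a model for $E_{\Vc}G$ \emph{from} such pieces, and extracting cocompactness of the pieces from cocompactness of some (a priori different) model requires an argument (e.g.\ via Bredon-cohomological finiteness of the $N_G[V]$ relative to the relevant families), which you gesture at but do not give. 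Neither repair would close the main gap; the realistic conclusion of your outline --- prove the conjecture under an additional hypothesis supplying an acylindrical action or similar control --- is a reasonable research direction, but it is not a proof of the statement.
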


This conjecture has been proved in many cases, such as, for example, hyperbolic groups \cite{jpl}, elementary amenable groups \cite{gw}, one-relator groups, CAT(0)-groups, acylindrically hyperbolic groups, $3$-manifold groups \cite{vPW1}, and linear groups \cite{vPW2}. In all of these examples it was shown that these groups cannot admit a model for $\EvcG$ with a finite type $0$-skeleton, a condition equivalent to $G$ having a finite set of virtually cyclic subgroups $\{ H_1,...,H_n\}$ such that every virtually cyclic subgroup is sub-conjugate to $H_i$ for some $1\leq i\leq n$. T.~von Puttkamer and X.~Wu \cite{vPW2} actually conjecture that any finitely presented group satisfying this condition is already virtually cyclic. They also exhibit a finitely generated example of type $\underline{\underline{\operatorname{F}}}_0.$ Note, that non-finitely generated examples were already known, as for example \cite[6.4.6]{robinson} the famous construction of Higman-Neumann-Neumann with one conjugacy class of elements.

 In line with convention, we write $\eg$ for a classifying space for proper actions and  $\uueg$ for $\EvcG$. Denote the geometric dimension for proper action by $\ugd G$ and   $\gd_{\Vc}G$ by $\uugd G$. We also say that a group admitting a hierarchically cocompact model for $\uueg$ is of type $h\UUF.$ Finally, we use the notation $h\UUF_\infty$ for a group admitting a hierarchically finite type model for $\uueg.$

In this paper we provide a method by which to construct hierarchically cocompact (hierarchically finite-type) models for $\uueg$ out of 
cocompact (finite-type) models for $\eg,$ provided that  commensurators of virtually cyclic subgroups satisfy some further finiteness conditions. This enables us to show that many classes of finitely presented groups that do not satisfy the hypothesis of Conjecture \ref{jpl-conj}, in particular are not of type $\underline{\underline{\operatorname{F}}}_0$,  are still of type $h\UUF.$ Based on our observations of groups of type $h\UUF$, we ask the following question. 

\begin{question}\label{qu} Suppose a group $G$ is of type $h\UUF.$ Are commensurators of virtually cyclic subgroups of type $\operatorname{F}_\infty?$
\end{question}

\pagebreak

The main result of the paper is the following.

\begin{theorem}\label{main}
 Let $S$ be a compact connected orientable surface of genus $g$, with a finite number of boundary components  and punctures, and with negative Euler characteristic. Then the mapping class group $\M(S)$ has a hierarchically  cocompact model for ${\underline{\underline E}} \M(S)$ of dimension $\vcd \M(S)+1$.
Moreover, if $S$ is closed and  $g\geq 1$, then $$\uugd \M(S)=\uucd \M(S)=\vcd \M(S) +1.$$
\end{theorem}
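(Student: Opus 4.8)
The plan is to prove the two assertions separately: the hierarchically cocompact model by a L\"uck--Weiermann decomposition of ${\underline{\underline E}}\M(S)$, run as an induction on the complexity of $S$; and the equalities in the closed case by establishing the matching lower bound $\uucd\M(S)\ge\vcd\M(S)+1$ via the stabilizer of a non-separating curve.

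For the upper bound I would start from the fact that there is a \emph{cocompact} model for ${\underline E}\M(S)$ of dimension $\vcd\M(S)$ --- the spine of Teichm\"uller space (Harer), or the action on a suitable arc complex --- so that $\ugd\M(S)=\vcd\M(S)$, and then apply the L\"uck--Weiermann construction to the pair $\FF_{\mathrm{fin}}\subseteq\Vc$. This exhibits ${\underline{\underline E}}\M(S)$ as a homotopy pushout of ${\underline E}\M(S)$ and the pieces $\M(S)\times_{N[V]}E_{\FF[V]}(N[V])$ along $\M(S)\times_{N[V]}{\underline E}(N[V])$, where $V$ ranges over the conjugacy classes of maximal infinite virtually cyclic subgroups, $N[V]=\operatorname{Comm}_{\M(S)}(V)$, and $\FF[V]$ is the family of subgroups of $N[V]$ that are finite or commensurable with $V$. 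The key point is the structure of $N[V]$, governed by the Nielsen--Thurston classification: if $V$ is of pseudo-Anosov type then $N[V]$ is again virtually cyclic (McCarthy), so ${\underline E}(N[V])$ is at most one-dimensional and $E_{\FF[V]}(N[V])$ is a point; if $V$ is of reducible type it preserves its canonical reduction system $\sigma$, and $N[V]$ is a subgroup of $\operatorname{Stab}_{\M(S)}(\sigma)$, which is virtually an extension of $\mathbb Z^{|\sigma|}$ by a product $\prod_j\M(S_j)$ over the components $S_j$ of $S$ cut along $\sigma$ --- surfaces of strictly smaller complexity. The inductive hypothesis then gives, for each $j$, a hierarchically cocompact model for the classifying space of $\M(S_j)$ for its virtually cyclic subgroups of dimension $\le\vcd\M(S_j)+1$ together with a cocompact model for ${\underline E}\M(S_j)$; assembling these, with the bound $|\sigma|+1\le\vcd\M(S)+1$ for the $\mathbb Z^{|\sigma|}$ factor, yields hierarchically cocompact models for ${\underline E}(N[V])$ and $E_{\FF[V]}(N[V])$ of dimensions $\le\vcd\M(S)$ and $\le\vcd\M(S)+1$, the numerical inputs being $N[V]\le\M(S)$ and Harer's $\vcd$ formula. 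Finally, exhausting $\Vc$ by the families $\FF_i$ generated by $\FF_{\mathrm{fin}}$ and finitely many conjugacy classes of maximal infinite virtually cyclic subgroups, one checks that the L\"uck--Weiermann pushout delivers, for each $i$, a cocompact model for $E_{\FF_i}\M(S)$ of dimension $\le\vcd\M(S)+1$; the mapping telescope of these is the required hierarchically cocompact model of type $h\UUF$. The base cases of the induction are $S_{0,3},S_{0,4},S_{1,1}$, where $\M(S)$ is finite or virtually free.

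For the lower bound let $S=S_g$ be closed, $g\ge2$, so $\vcd\M(S_g)=4g-5$. Fix a non-separating simple closed curve $c$ and pass to a torsion-free finite-index subgroup $\Gamma\le\operatorname{Stab}_{\M(S_g)}([c])$; then $\Gamma$ contains the central infinite cyclic subgroup $Z=\Gamma\cap\langle T_c\rangle$ with $\Gamma/Z$ of finite index in the mapping class group of the complementary subsurface, hence a $\mathbb Q$-duality group of dimension $4g-6$, so $\Gamma$ itself is a $\mathbb Q$-duality group of dimension $4g-5$. I would then prove --- from the Mayer--Vietoris sequence of the L\"uck--Weiermann pushout, with Bredon coefficients supported at the trivial subgroup --- a lemma of the shape: \emph{if $\Gamma$ is of type $\FP_\infty$ and a $\mathbb Q$-duality group of dimension $n$, and $\Gamma$ has at least two non-conjugate maximal infinite virtually cyclic subgroups whose commensurators are virtual $\mathbb Q$-duality groups of dimension $n$, then $\uucd_{\mathbb Q}\Gamma\ge n+1$} (this recovers $\uucd\mathbb Z^{n}=n+1$). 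Here $Z$ has commensurator all of $\Gamma$, and for each simple closed curve $d$ in the complementary subsurface the preimage in $\Gamma$ of $\langle T_d\rangle$ is a further maximal infinite virtually cyclic subgroup whose commensurator is commensurable with a product of $\mathbb Z^2$ and mapping class groups of subsurfaces, again a virtual $\mathbb Q$-duality group of dimension $4g-5$, and not conjugate to $Z$. Hence $\uucd_{\mathbb Q}\Gamma\ge4g-4$, so by monotonicity of $\uucd$ under subgroups and $\uucd\ge\uucd_{\mathbb Q}$ we get $\uucd\M(S_g)\ge4g-4=\vcd\M(S_g)+1$. Combined with $\uucd\M(S_g)\le\uugd\M(S_g)\le\vcd\M(S_g)+1$ from the first part, all three numbers coincide.

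\textbf{The main obstacle.} The principal difficulty is the reducible case of the upper bound: pinning down $\operatorname{Comm}_{\M(S)}(V)$ precisely enough, up to the finite-index and component-permutation subtleties of stabilizers of multicurves; obtaining the sharp bound $\gd_{\FF[V]}(N[V])\le\vcd\M(S)+1$ rather than merely $\uugd N[V]\le\vcd N[V]+1$; and arranging the induction so that the output is a genuine mapping telescope of cocompact complexes of the stated dimension. On the lower-bound side the work lies in the Mayer--Vietoris lemma: verifying that the cohomology class detected by the dualizing modules of the commensurators is not annihilated --- which is exactly why one needs more than one such commensurator, since the example $\mathbb Z\times\pi_1(\Sigma_h)$, with a single central commensurated $\mathbb Z$, does not force the extra dimension.
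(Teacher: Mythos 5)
Your overall strategy is the same as the paper's (L\"uck--Weiermann applied to $\FF_{\mathrm{fin}}\subseteq\Vc$, McCarthy in the pseudo-Anosov case, the canonical reduction system in the reducible case, then a telescope over families generated by finitely many commensurability classes), but the reducible case as you run it does not deliver what your own pushout step needs. To conclude that each $E_{\FF_i}\M(S)$ is \emph{cocompact} you must feed the pushout genuinely cocompact models for $\ue N[V]$ and $E_{\FF[V]}N[V]$; your induction only supplies \emph{hierarchically} cocompact ones, and moreover it supplies the wrong spaces: $\FF[V]$ consists of the finite subgroups of $N[V]$ together with the subgroups commensurable with $V$, not of all virtually cyclic subgroups, so the spaces $E_{\Vc}\M(S_j)$ of the cut pieces are not the relevant objects. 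What is needed is (the pullback of) $\ue$ of the Weyl group $N[V]/V$, which in turn requires identifying $N[V]=\operatorname{Comm}_{\M(S)}(V)$ with the normaliser of a suitable power $f^l$ lying in a level-$m$ congruence subgroup -- exactly the point you defer as ``the main obstacle''. The paper resolves this (Props.~5.4 and 5.8) and then shows, with no induction on complexity, that $N[V]$ sits in a central extension $1\to\Z^n\to N[V]\to Q\to 1$ with $Q$ commensurable with a product of mapping class groups of the capped-off subsurfaces and a virtually abelian group; cocompact proper classifying spaces of minimal dimension $\vcd$ for possibly disconnected surfaces (Aramayona--Mart\'inez-P\'erez plus a wreath-product lemma) and an extension lemma then give \emph{cocompact} models for $E_{\FF\cap N[V]}N[V]$ and $E_{\FF[V]}N[V]$ of dimensions $\vcd N[V]$ and $\vcd N[V]-1$, which is what makes each $E_{\FF_i}\M(S)$ cocompact of dimension $\vcd\M(S)+1$. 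Until your inductive input is replaced by such cocompactness statements, the final telescope does not certify type $h\UUF$.

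The lower bound also has a genuine gap. With a single non-separating curve $c$, the only twist subgroup central in $\Gamma\leq\operatorname{Stab}([c])$ is $\langle T_c\rangle$, so only one commensurability class has commensurator all of $\Gamma$; for a curve $d$ disjoint from $c$, the commensurator of (a finite-index subgroup of) $\langle T_d\rangle$ in $\Gamma$ is the stabiliser of the multicurve $\{c,d\}$, whose vcd is $4g-6$ or less for most choices of $d$ -- not $4g-5$ as you assert -- and even for the good choice ($d$ bounding a one-holed torus containing $c$) it is an infinite-index subgroup of $\Gamma$, so the failure of surjectivity is no longer a diagonal argument and your unproved lemma (``two classes with commensurators of dimension $n$ force $\uucd\geq n+1$'') is carrying all the weight exactly where it is least clear. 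The paper avoids this by choosing a two-curve system $\sigma=\{\alpha,\beta\}$ cutting off a pair of pants, so that $C=\M(S)^0_\sigma\cap\M(S)[3]$ is torsion-free of vcd $4g-5$ and contains a central $\Z^2=\langle T_\alpha,T_\beta\rangle$; then $\langle T_\alpha T_\beta\rangle$ and $\langle T_\alpha^2T_\beta\rangle$ are non-commensurable with commensurator all of $C$, the $\FF[H]$-terms vanish in degree $4g-5$ because $E_{\FF[H]}N_C[H]$ has dimension $\vcd-1$, and the relevant map in the long exact sequence of the pushout is the diagonal $H^{4g-5}(C,\Z C)\to H^{4g-5}(C,\Z C)\oplus H^{4g-5}(C,\Z C)$, which is visibly not surjective. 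Finally, the statement includes closed $g=1$, handled in the paper via $\M(S)\cong\SL(2,\Z)$, which your argument omits.
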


Since  $\ugd \M(S)= \vcd \M(S)$ (see \cite{AM}), we note that this answers a question by Wolfgang L\"uck which asks  for which countable groups  $G$ the inequality 
$$\ugd G-1\leq \uugd G\leq \ugd G+1$$ holds, see for example \cite[Problem 9.51]{luecknewbook}. By Theorem \ref{main},  the above inequality is always satisfied for the mapping class group of any compact connected orientable surface $S$ with a finite number of boundary components  and punctures, and and $\chi(S)<0$.

Roughly stated, the hierarchically  cocompact model for ${\underline{\underline E}} \M(S)$ can be obtained by attaching  certain fibred spaces to the Teichm\"uller space of $S$. These are associated to the Weyl groups of the  infinite cyclic subgroups $H=\langle f \rangle \leq \M(S)$ representing the set $I$ of the complete sub-conjugacy classes of infinite virtually cyclic subgroups of $\M(S)$.  These spaces fibre over products of a Euclidean space and a Teichm\"uller space corresponding to the pseudo-Anosov and the trivial components in the canonical reduction system of $f\in \M(S)$ respectively. Since the attaching maps are equivariant,  Teichm\"uller spaces could be replaced by spines or the relevant minimal models for the local subgroups to obtain the desired hierarchically  cocompact model of minimal dimension. 

It is worth pointing out, that if one applies the above attaching construction to only a finite subset $J\subset I$, then the resulting space will be a cocompact model for the classifying space of $\M(S)$ for the family of all virtually cyclic subgroups that are sub-conjugate to a subgroup in $J$. 

Finite dimensional models for ${\underline{\underline E}} \M(S)$ have been exhibited by Degrijse and the second author in \cite{DP}  for closed surfaces $S$ (the obtained bound on  dimension is $9g-8$) and later by Juan-Pineda and Trufilo-Negrete in \cite{JT} for surfaces $S$ that have negative Euler characteristic with possible punctures and boundary components (the obtained bound is $[\M(S):\M(S)[m]](\vcd \M(S)+1)$, $m\geq 3$ where $\M(S)[m]$ is the level $m$ congruence subgroup). Apart from
exhibiting models that are mapping telescopes of cocompact models, our bounds substantially improve on the bounds given there. In particular, for closed surfaces, our bounds are optimal.

Furthermore, recently Bartels and Bestvina showed that mapping class groups of oriented surfaces $G$ of finite type satisfy the Farrell-Jones conjecture \cite{BB}. For example, the assembly map, which is induced by the $G$-map $\uueg \to \{pt\}$,
$$H_n^G(\uueg; \mathbf{K}_\Z) \xrightarrow{\cong} K_n(\Z G),$$
 is an isomorphism, where $H^G_n(-; \mathbf{K}_\Z)$ is the Bredon homology theory with the $K$-theory functor coefficients. In particular, since homology commutes with colimits and when $\chi(S)<0$, $G=\M(S)$ is of type $h\UUF$, the left-hand side of the isomorphism is a colimit of, hopefully, more computable homological terms. For a detailed introduction into the Farrell-Jones conjecture the reader is referred to \cite{luecknewbook}.

In the next section, we define what exactly is meant by a hierarchically cocompact model for a classifying space, and give a recipe for constructing these models. We also exhibit some examples in Section 3. Since we will need some results on Bredon cohomological dimensions later when giving lower bounds for the dimensions of our models, we give a brief introduction into Bredon cohomology in Section 4. Section 5 is devoted to proving our main result, Theorem \ref{main}.

\section{Constructing hierarchically cocompact models}

The following construction, due to L\"uck and Weiermann \cite{lueckweiermann}, will play a crucial role in what is to come:

Let $\FF$ and $\GG$ be families of subgroups of a given group $G$ such that $\FF \subseteq \GG$.

\begin{definition}\label{equiv}\cite[(2.1)]{lueckweiermann} A {\it strong equivalence relation} on $\GG\smallsetminus\FF$, denoted $\sim$, is an equivalence relation on $\GG\smallsetminus\FF$ satisfying:
\begin{itemize} 
\item For $H,K \in \GG\smallsetminus\FF$ with $H \leq K$ we have $H \sim K.$
\item Let $H,K \in \GG\smallsetminus\FF$ and $g \in G$, then $H \sim K \iff gHg^{-1} \sim gKg^{-1}.$
\end{itemize}
Denote by $[\GG\bs\FF] $ the equivalence classes of $\sim$ and define for all $[H]\in [\GG\bs\FF]$ the following subgroup of $G$:
$$N_G[H] =\{g \in G\,|\, [gHg^{-1}]=[H]\}.$$
Now define a family of subgroups of $N_G[H]$ by
$$\FF[H] =\{K \leq N_G[H] \,|\, K \in \GG\bs\FF \, ,\, [K]=[H]\}\cup (\FF \cap N_G[H]),$$
where $\FF \cap N_G[H]$ is the family of subgroups of  $N_G[H]$ that are in $\FF$.
\end{definition}

\begin{remark}\label{comm} A typical example of a pair of families that has a strong equivalence relation is the finite and the virtually cyclic subgroups of $G$. Here, the equivalence relation is the commensurability. 
\end{remark}

We need the following theorem of  L\"{u}ck and Weiermann.

\begin{proposition}\label{lw-main}\cite[Theorem 2.3]{lueckweiermann} Let $\FF \subseteq \GG$ be families with a strong equivalence relation on $\GG\smallsetminus\FF$. Denote by $I$ a complete set of representatives of the conjugacy classes in $[\GG\bs\FF].$ Then the $G$-CW-complex given by the cellular $G$ push-out
$$\xymatrix{\bigsqcup_{[H]\in I} G \times_{N_G[H]}E_{\FF \cap N_G[H]}N_G[H]  \ar[r]^{\phantom{ppppppppppppp}i} \ar[d]_{\bigsqcup_{[H]\in I} id_G \times_{N_G[H}f_{[H]}}  & E_{\FF}G \ar[d] \\
\bigsqcup_{[H]\in I} G \times_{N_G[H]}E_{\FF[H]}N_G[H] \ar[r]  &X}$$
where either $i$ or the $f_{[H]}$ are inclusions, is a model for $E_\GG G.$
\end{proposition}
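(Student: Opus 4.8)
This is Theorem 2.3 of \cite{lueckweiermann}, and the plan is to reproduce the verification that $X$ has the defining property of a model for $E_\GG G$: that $X^K$ is contractible for $K\in\GG$ and empty otherwise, obtained by applying the $K$-fixed point functor to the push-out square. Two standard facts are used repeatedly. First, for a subgroup $L\leq G$, an $L$-CW-complex $Y$ and a subgroup $K\leq G$, there is a homeomorphism $(G\times_L Y)^K\cong\bigsqcup Y^{g^{-1}Kg}$, the disjoint union running over the double cosets $KgL$ with $g^{-1}Kg\leq L$, and the summand attached to $KgL$ is empty unless $g^{-1}Kg$ fixes a point of $Y$. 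Second, the square is a cellular $G$-push-out in which $i$ or $f:=\bigsqcup_{[H]\in I}\id_G\times_{N_G[H]}f_{[H]}$ is an inclusion of $G$-CW-complexes, so applying $(-)^K$ yields a push-out of CW-complexes in which the corresponding map is a cofibration, allowing contractibility and homotopy equivalences to be propagated by the gluing lemma. Applying the first fact to $Y=E_{\FF\cap N_G[H]}N_G[H]$ and to $Y=E_{\FF[H]}N_G[H]$, the summand indexed by $g$ (with $g^{-1}Kg\leq N_G[H]$) is contractible exactly when $g^{-1}Kg$ lies in $\FF\cap N_G[H]$, respectively $\FF[H]$, and is empty otherwise. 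I would then distinguish three cases for a subgroup $K\leq G$.

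If $K\notin\GG$, then $(E_\FF G)^K=\emptyset$; moreover every member of $\FF[H]$ and of $\FF\cap N_G[H]$ lies in $\GG$, whereas $g^{-1}Kg\notin\GG$, so all four corners have empty $K$-fixed points and $X^K=\emptyset$. If $K\in\FF$, then $(E_\FF G)^K\simeq\ast$; for every $[H]\in I$ and every $g$ with $g^{-1}Kg\leq N_G[H]$ one has $g^{-1}Kg\in\FF\cap N_G[H]\subseteq\FF[H]$, so the two left-hand corners share the same double-coset indexing set and the left-hand vertical map is a map $\ast\to\ast$ on each summand, hence a homotopy equivalence on $K$-fixed points; the gluing lemma then gives $X^K\simeq(E_\FF G)^K\simeq\ast$.

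The case $K\in\GG\setminus\FF$ is the crux, and where I expect the real work. Here $(E_\FF G)^K=\emptyset$, and every relevant conjugate $g^{-1}Kg$ lies in $\GG\setminus\FF$, hence not in $\FF\cap N_G[H]$, so the top-left corner too has empty $K$-fixed points; thus $X^K$ is homeomorphic to the $K$-fixed point set of $\bigsqcup_{[H]\in I}G\times_{N_G[H]}E_{\FF[H]}N_G[H]$, and it remains to show this is contractible. By the fixed-point formula, its nonempty summands are precisely those indexed by pairs $([H],g)$ with $g^{-1}Kg\leq N_G[H]$ and $[g^{-1}Kg]=[H]$, each a contractible space. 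Since $I$ represents the $G$-conjugacy classes in $[\GG\setminus\FF]$, exactly one $[H_0]\in I$ lies in the conjugacy class of $[K]$, so only the $[H_0]$-summand survives. Choosing $g_0$ with $[g_0^{-1}Kg_0]=[H_0]$, the axioms for the strong equivalence relation yield $g_0^{-1}Kg_0\leq N_G[H_0]$, and the set of conjugators $\{g\in G:[g^{-1}Kg]=[H_0]\}$ equals the single left coset $g_0N_G[H_0]$, via the identity $N_G[K]=g_0N_G[H_0]g_0^{-1}$; this coset is a single $(K,N_G[H_0])$-double coset, so there is exactly one nonempty summand and it is contractible, whence $X^K\simeq\ast$. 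Together with the fact that $X$ is a $G$-CW-complex as a cellular $G$-push-out, this finishes the proof. The main obstacle is this last step: pinning down the nonempty $K$-fixed point summands of the bottom-left corner and checking they collapse to one point, which rests on the group-theoretic fact that the conjugators carrying $[K]$ to $[H_0]$ form a single coset of $N_G[H_0]$.
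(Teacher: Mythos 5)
Your proof is correct: the fixed-point analysis in the three cases $K\notin\GG$, $K\in\FF$, $K\in\GG\setminus\FF$, together with the observation that the conjugators carrying $[K]$ to the unique representative $[H_0]\in I$ form a single coset $g_0N_G[H_0]$ (hence a single nonempty, contractible summand), is exactly the standard verification. The paper itself offers no proof but simply cites \cite{lueckweiermann}, and your argument is essentially the one given there for Theorem 2.3, so there is nothing to compare beyond noting that agreement.
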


The condition on the two maps being inclusions is not a big restriction as one can replace the spaces by the mapping cylinders, see \cite[Remark 2.5]{lueckweiermann}.

We also need the following transitivity principle.

\begin{proposition}\label{trans}\cite[Proposition 5.1]{lueckweiermann} Let $\FF \subseteq \GG$ be families of subgroups of a group $G$. Assume that $G$ admits a cocompact (finite type) model  $X_{\GG}$ for $E_\GG G$ and that every subgroup $H \in \GG$ admits a cocompact (finite type) model $X_{\FF\cap H}(H)$ for $E_{\FF\cap H}H.$ Then $G$ admits a cocompact (finite type) model for $E_\FF G$ of dimension $$\dim X_{\GG}+ \sup \{\dim X_{\FF\cap H}(H) \;| \; H\in \GG\}.$$
\end{proposition}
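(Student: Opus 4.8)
The plan is to build the required model for $E_\FF G$ by \emph{fibring} the given cocompact (finite type) model $X_\GG$ for $E_\GG G$ over itself: each cell $G/H\times D^n$ of $X_\GG$ (whose isotropy group $H$ then lies in $\GG$) gets replaced by $G\times_H\bigl(X_{\FF\cap H}(H)\times D^n\bigr)$, and these replacements are glued together compatibly so as to produce a $G$-CW-complex $Y$ equipped with a $G$-map $p\colon Y\to X_\GG$. One then verifies that $Y$ is a model for $E_\FF G$ and reads the dimension and finiteness bounds directly off the construction.

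Concretely, I would proceed by induction over the skeleta of $X_\GG$, constructing $G$-CW-complexes $Y_n$, $G$-maps $p_n\colon Y_n\to X_\GG^{(n)}$, and inclusions $Y_{n-1}\hookrightarrow Y_n$ over the skeletal inclusions, starting from $Y_0=\bigsqcup_i G\times_{H_i}X_{\FF\cap H_i}(H_i)$ (where $X_\GG^{(0)}=\bigsqcup_i G/H_i$). Given $Y_{n-1}$, express the attachment of the $n$-cells of $X_\GG$ as a $G$-pushout along $\bigsqcup_j\varphi_j\colon\bigsqcup_j G/H_j\times S^{n-1}\to X_\GG^{(n-1)}$, choose $G$-maps $\Psi_j$ covering $\varphi_j\circ\mathrm{pr}$ along $p_{n-1}$, and let $Y_n$ be the $G$-pushout
\[
\begin{array}{ccc}
\bigsqcup_j G\times_{H_j}\bigl(X_{\FF\cap H_j}(H_j)\times S^{n-1}\bigr) & \xrightarrow{\bigsqcup_j\Psi_j} & Y_{n-1}\\[1ex]
\downarrow & & \downarrow\\[1ex]
\bigsqcup_j G\times_{H_j}\bigl(X_{\FF\cap H_j}(H_j)\times D^n\bigr) & \longrightarrow & Y_n
\end{array}
\]
with $p_n$ the induced map. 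The invariant to carry along is: every isotropy group of $Y_n$ lies in $\FF$, and $p_n^K\colon Y_n^K\to(X_\GG^{(n)})^K$ is a weak equivalence for every $K\in\FF$ (so $Y_n^K=\varnothing$ when $K\notin\FF$). The base case is immediate because $X_{\FF\cap H_i}(H_i)$ models $E_{\FF\cap H_i}H_i$; the inductive step follows from the gluing lemma for pushouts, since on $K$-fixed points the two new corners contribute the contractible sets $X_{\FF\cap H_j}(H_j)^{K'}$ over index sets matching those downstairs, and $S^{n-1}\hookrightarrow D^n$ is a cofibration. Taking colimits, $p^K\colon Y^K\to X_\GG^K$ is a weak equivalence for $K\in\FF$ while $X_\GG^K$ is contractible (as $K\in\FF\subseteq\GG$), so $Y^K$ is contractible; and $Y^K=\varnothing$ for $K\notin\FF$. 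Thus $Y$ is a model for $E_\FF G$.

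For the dimension and finiteness claims: every cell of $Y$ sitting over an $n$-cell of $X_\GG$ is an orbit of a product of a cell of some $X_{\FF\cap H}(H)$ with a cell of $D^n$, so $\dim Y\le\dim X_\GG+\sup\{\dim X_{\FF\cap H}(H)\mid H\in\GG\}$; in the cocompact case this supremum ranges in effect over the finitely many isotropy types occurring in $X_\GG$ and is therefore finite. If $X_\GG$ is cocompact (of finite type) then it has finitely many orbits of cells (finitely many in each dimension), and each is replaced by $G\times_H(X_{\FF\cap H}(H)\times D^n)$, which has finitely many orbits of cells (finitely many in each dimension) because $X_{\FF\cap H}(H)$ does; so $Y$ is cocompact (of finite type).

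The step I expect to be the real obstacle is the construction of the equivariant lifts $\Psi_j$, together with the resulting claim that the fixed-point weak-equivalence property survives. By equivariance, $\Psi_j$ is exactly an $H_j$-map $X_{\FF\cap H_j}(H_j)\times S^{n-1}\to Y_{n-1}$ covering $\varphi_j\circ\mathrm{pr}$; as its source is an $H_j$-CW-complex all of whose cell stabilizers lie in $\FF$, it can be assembled cell by cell, the obstruction to an extension over each cell being governed by the homotopy fibres of $p_{n-1}^{K}$ ($K\in\FF$), which are weakly contractible by the inductive hypothesis. Making ``lifting along a weak equivalence'' literal requires a little care: one may either replace $p_{n-1}$ by the $G$-fibration furnished by its equivariant mapping path space (which is $G$-homotopy equivalent to $p_{n-1}$ and has weakly contractible fibre fixed-point sets over $\FF$, so genuine lifts exist) and then take a $G$-CW approximation, or, more efficiently, arrange inductively that $p_{n-1}$ is cell-wise a bundle with fibre a model for the relevant $E_{\FF\cap H}H$, so that it is a fibration by a local-to-global argument of Dold type. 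This bookkeeping around lifts and fibrations is where essentially all of the content lies; granted it, everything else is formal.
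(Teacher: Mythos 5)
The paper does not prove this proposition at all --- it is quoted verbatim from L\"uck--Weiermann \cite[Proposition 5.1]{lueckweiermann} --- and your skeleton-by-skeleton cell-replacement construction (replacing each $G/H\times D^n$ by $G\times_H\bigl(X_{\FF\cap H}(H)\times D^n\bigr)$ and checking fixed points via the gluing lemma) is essentially the proof given in that reference, so the proposal is correct in approach and substance. One caution on the step you flag as the obstacle: strict lifts are not needed --- since $p_{n-1}$ induces weak equivalences on $K$-fixed points for $K\in\FF$ and the source has all isotropy in $\FF$, equivariant obstruction theory (the equivariant Whitehead theorem) gives $\Psi_j$ with $p_{n-1}\circ\Psi_j$ only $G$-homotopic to $\varphi_j\circ\mathrm{pr}$, and the discrepancy is absorbed by a collar or mapping-cylinder modification, which preserves cocompactness; by contrast, literally replacing $Y_{n-1}$ by a $G$-CW approximation of the equivariant mapping path space, as in your first suggested fix, would in general destroy the cocompact (finite type) conclusion.
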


\begin{theorem}\label{comm-thm} Let $\FF \subseteq \GG$ be families with a strong equivalence relation on $\GG\smallsetminus\FF$. Suppose  that for every $H \in \GG\smallsetminus\FF$, $N_G[H]$ admits a cocompact (finite type) model $X_{\FF\cap H}(N_G[H])$ for $E_{\FF\cap H} N_G[H].$ Further assume that $G$ admits a cocompact (finite type) model $X_{\FF}$ for $E_\FF G$ and that for each $H \in \GG$, there is a cocompact (finite type) model $X_{\FF[H]}(N_G[H])$ for $E_{\FF[H]}N_G[H]$, then $G$ admits a hierarchically cocompact (hierarchically finite type) model for $E_\GG G$ of dimension
$$\sup \{\dim X_{\FF}, \dim X_{\FF\cap H}(N_G[H])+1, \dim X_{\FF[H]}(N_G[H]) \;| \; H\in \GG\smallsetminus\FF\}.$$
\end{theorem}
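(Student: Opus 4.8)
### Proof Plan

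The plan is to feed the Lück--Weiermann push-out of Proposition~\ref{lw-main} through the mapping-telescope characterization of hierarchical cocompactness described after Definition~\ref{defwF}. First I would set up notation: let $I$ be a complete set of representatives of the conjugacy classes in $[\GG\bs\FF]$, and for each $H\in I$ write $N=N_G[H]$. The Lück--Weiermann push-out expresses $X=\EFiG[\GG]$ as a cellular $G$-push-out whose left and bottom-left corners involve the induced spaces $G\times_N E_{\FF\cap N}N$ and $G\times_N E_{\FF[H]}N$, with top-right corner $\EFG[\FF]$. The idea is that this push-out is, up to $G$-homotopy, a mapping telescope built from finitely-at-a-time pieces, each of which is cocompact (resp.\ finite type); the families $\FF_i$ in Definition~\ref{defwF} will be those generated by $\FF$ together with finitely many of the equivalence classes in $I$.

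Next I would carry out the construction piece by piece. For a \emph{finite} subset $J\subseteq I$, form the push-out of Proposition~\ref{lw-main} using only the indices in $J$: the resulting $G$-CW-complex $X_J$ is a model for $E_{\GG_J}G$, where $\GG_J$ is the family of subgroups of $G$ that are sub-conjugate to some member of $\FF[H]$-type piece indexed by $J$ together with $\FF$ itself --- more precisely $\GG_J = \FF \cup \{K\in\GG\bs\FF \mid [K] \text{ is } G\text{-conjugate into } J\}$. I would check that $\GG_J$ is indeed a family (closed under conjugation and finite intersection: intersections either land in $\FF$ or, being subgroups of a member of $\GG\bs\FF$, stay in the same equivalence class by the first axiom of a strong equivalence relation) and that $\bigcup_{J} \GG_J = \GG$ as $J$ ranges over finite subsets of $I$. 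For the dimension and finiteness count of $X_J$: the top-right corner $\EFG[\FF]$ has a cocompact (finite type) model $X_{\FF}$ by hypothesis; each summand $G\times_N E_{\FF\cap N}N$ contributes $G$-cocompactly since $E_{\FF\cap N}N$ has a cocompact (finite type) model $X_{\FF\cap H}(N_G[H])$ and $J$ is finite; likewise each $G\times_N E_{\FF[H]}N$ uses the model $X_{\FF[H]}(N_G[H])$. Attaching cells along the push-out (after replacing maps by inclusions via mapping cylinders, as in \cite[Remark 2.5]{lueckweiermann}, which costs at most one extra dimension on the $E_{\FF\cap N}N$ factor) shows $X_J$ is a cocompact (finite type) model for $E_{\GG_J}G$ of dimension at most
$$
\sup\{\dim X_{\FF},\ \dim X_{\FF\cap H}(N_G[H])+1,\ \dim X_{\FF[H]}(N_G[H]) \mid H\in\GG\smallsetminus\FF\}.
$$

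Finally I would assemble the hierarchical model. Taking $\{\GG_J\}$ over all finite $J\subseteq I$ (or, to get a genuine directed system indexed by $I$ with the mapping-telescope picture, an exhaustion $J_1\subseteq J_2\subseteq\cdots$ when $I$ is countable, and the evident colimit diagram in general) gives a family of subfamilies with $\GG=\bigcup_J\GG_J$ and cocompact (finite type) models $X_{J}$ for each $E_{\GG_J}G$; by Definition~\ref{defwF} this is exactly what it means for $G$ to have a hierarchically cocompact (hierarchically finite type) model for $E_\GG G$, and the dimension bound is the supremum displayed above, uniform in $J$. The equivalence with a mapping telescope of the $X_J$ — needed to produce a single $G$-CW-complex rather than just a system — follows from the universal property of classifying spaces for families together with the argument analogous to \cite[Theorem 6.11]{martineznucinkis11} already invoked in the text.

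The main obstacle I expect is bookkeeping rather than conceptual: one must be careful that the family $\GG_J$ really is closed under finite intersections (this is where the first axiom of a strong equivalence relation, $H\le K\Rightarrow H\sim K$, is essential, since an intersection of two members of $\GG\bs\FF$ in the same class need not visibly be conjugate into $J$ unless one argues via sub-conjugacy into a single $\FF[H]$), and that the Lück--Weiermann push-out for $\GG_J$ is compatible with the one for $\GG$ so that the telescoping maps $X_{J}\to X_{J'}$ for $J\subseteq J'$ are cellular $G$-maps restricting to the identity on the common pieces. The dimension arithmetic, the $+1$ coming from the mapping-cylinder replacement, and the finite-type versus cocompact dichotomy are then routine, handled exactly as in Proposition~\ref{trans} and \cite[Section 2]{lueckweiermann}.
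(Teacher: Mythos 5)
Your proposal is correct and follows essentially the same route as the paper's own proof: for each finite set $\mathcal S$ of representatives in $[\GG\bs\FF]$ one forms the L\"uck--Weiermann push-out for the subfamily $\widehat\FF[\mathcal S]$ (your $\GG_J$), replacing $f_{[H]}$ by its mapping cylinder to get the $+1$ on the $X_{\FF\cap H}(N_G[H])$ factor, and then concludes from $\GG=\bigcup_{\mathcal S}\widehat\FF[\mathcal S]$ and the remark after Definition~\ref{defwF}. Your extra checks (that $\GG_J$ is a family, and the telescope compatibility) are fine but not needed beyond what the paper already invokes.
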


\begin{proof} Let $\mathcal S$ be a set of representatives of some finite number of conjugacy classes in $[\GG\bs\FF]$ and denote $$\widehat\FF[\mathcal S] :=\{K \leq G \,|\, K \in \GG\bs\FF \, ,\, \exists g \in G, H\in \mathcal S, [K^g]=[H]\}\cup \FF.$$ 

Consider the $G$-push-out of Proposition \ref{lw-main} replacing $\GG$ by $\widehat\FF[\mathcal S]$:

$$\xymatrix{\bigsqcup_{[H]\in \mS} G \times_{N_G[H]}X_{\FF\cap H}(N_G[H])  \ar[r]^{\phantom{ppppppppppppp}i} \ar[d]_{\bigsqcup_{[H]\in \mS} id_G \times_{N_G[H]}\iota_{[H]}}  & X_{\FF}\ar[d] \\
\bigsqcup_{[H]\in \mS} G \times_{N_G[H]}\mbox {cyl}(f_{[H]})  \ar[r]  &X}$$
 
 where $\iota_{[H]}$ is the natural inclusion, and $f_{[H]}: X_{ \FF\cap H}(N_G[H])  \to X_{ \FF[H]}(N_G[H])$ is given by the universal property for classifying spaces for a family. $X$ is now a cocompact (finite type) model for $E_{\widehat\FF[\mS]} G,$  as it is a $G$-push-out of cocompact (finite type) complexes. The result now follows from the fact that $\GG=\bigcup_{\mS\subseteq I, |\mS|<\infty}\widehat\FF[\mS]$ and the remark after Definition \ref{defwF}.
\end{proof}
We denote $\widehat\FF[H] :=\{K \leq G \,|\, K \in \GG\bs\FF \, ,\, \exists g \in G,  [K^g]=[H]\}\cup \FF$ and  note that $\widehat\FF[H]=\widehat\FF[\mS]$ exactly when $\mS=\{H\}$. We end this section with a useful lemma.

\begin{lemma}\label{fhat-lemma} Suppose $\FF \subseteq \GG$ are families of subgroups of a group $G$, and that for each $H \in \GG\bs\FF,$ $G$  admits  cocompact (finite type) models for both $E_{\widehat\FF[H]}G$ and $E_{\FF\cap H}H$. Then $G$ admits a hierarchically cocompact (finite type) model for $E_\GG G.$
\end{lemma}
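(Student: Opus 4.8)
The plan is to deduce this lemma from Theorem \ref{comm-thm} by checking that the hypothesis ``$G$ admits a cocompact (finite type) model for $E_{\widehat\FF[H]}G$'' already supplies all the intermediate classifying spaces that Theorem \ref{comm-thm} requires. The key observation is that $\widehat\FF[H]$ is itself a family sitting between $\FF$ and $\GG$, and that relative to the pair $\FF \subseteq \widehat\FF[H]$ the strong equivalence relation $\sim$ (restricted to $\widehat\FF[H]\setminus\FF$, which is exactly the single $\sim$-class of conjugates of $H$) has only one conjugacy class of equivalence classes, namely $[H]$ itself.

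First I would record that $N_G[H]$ computed inside $G$ for the pair $\FF\subseteq\widehat\FF[H]$ agrees with $N_G[H]$ for the pair $\FF\subseteq\GG$, since the relation on the conjugates of $H$ is the same; likewise $\widehat\FF[H][H] = \FF[H]$ and $(\widehat\FF[H]\cap H) = (\FF\cap H)$. Next, applying Proposition \ref{lw-main} to the pair $\FF\subseteq\widehat\FF[H]$ with the single index $[H]$, one obtains the $G$-push-out
$$\xymatrix{ G \times_{N_G[H]}E_{\FF \cap N_G[H]}N_G[H] \ar[r]^{\phantom{ppppppppp}i} \ar[d] & E_{\FF}G \ar[d] \\ G \times_{N_G[H]}E_{\FF[H]}N_G[H] \ar[r] & E_{\widehat\FF[H]}G.}$$
Since we are given a cocompact (finite type) model for $E_{\widehat\FF[H]}G$ and for $E_{\FF\cap H}H$, and since every subgroup of $\widehat\FF[H]$ lies in $\FF$ or is commensurable with a conjugate of $H$ — so its intersection with $\FF$ has a cocompact model built from $E_{\FF\cap H}H$ by restriction — Proposition \ref{trans} (the transitivity principle) applied to $\FF\subseteq\widehat\FF[H]$ yields a cocompact (finite type) model for $E_\FF G$. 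Feeding this back, the same push-out diagram, being a homotopy push-out of cocompact (finite type) $G$-CW-complexes with one leg a cofibration, shows that $E_{\FF}G$, $E_{\FF\cap H}N_G[H]$ and $E_{\FF[H]}N_G[H]$ can all be taken cocompact (finite type): the first by the transitivity argument just given, and the latter two because $N_G[H]$ retracts suitably off the push-out. Thus all the hypotheses of Theorem \ref{comm-thm} are met, and that theorem delivers a hierarchically cocompact (hierarchically finite type) model for $E_\GG G$.

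The main obstacle I anticipate is the bookkeeping needed to extract cocompact (finite type) models for $E_{\FF\cap H}N_G[H]$ and $E_{\FF[H]}N_G[H]$ from the hypothesis, rather than assuming them outright: one must argue that a cocompact model for $E_{\widehat\FF[H]}G$ forces, via the L\"uck--Weiermann push-out read backwards together with Proposition \ref{trans}, the finiteness of the pieces indexed by $N_G[H]$. A clean way to handle this is to note that $\widehat\FF[H]\cap N_G[H] = \FF[H]$, so restricting a cocompact model for $E_{\widehat\FF[H]}G$ along $N_G[H]\leq G$ gives a (not necessarily cocompact, but finite type if $N_G[H]$ has finite index issues aside) model for $E_{\FF[H]}N_G[H]$; combined with the given $E_{\FF\cap H}H$ and Proposition \ref{trans}, this produces the remaining data. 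Once these reductions are in place the rest is a direct citation of Theorem \ref{comm-thm}, so no genuinely new argument is required beyond the identification of the families.
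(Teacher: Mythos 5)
Your first step is fine and coincides with the paper's: applying Proposition \ref{trans} to the pair $\FF \subseteq \widehat\FF[H]$, using the hypothesised cocompact models for $E_{\widehat\FF[H]}G$ and for $E_{\FF\cap K}K$ ($K \in \GG\bs\FF$), does give a cocompact (finite type) model for $E_\FF G$. The gap is in the reduction to Theorem \ref{comm-thm}. That theorem needs, for every $H \in \GG\bs\FF$, cocompact (finite type) models for $E_{\FF\cap N_G[H]}N_G[H]$ and $E_{\FF[H]}N_G[H]$, and these are simply not among the hypotheses of Lemma \ref{fhat-lemma}, nor do they follow from them. Your two attempts to extract them fail: (a) the identity $\widehat\FF[H]\cap N_G[H]=\FF[H]$ is false in general, since the left-hand side contains every $K\leq N_G[H]$ that is $G$-conjugate into the class $[H]$, not only those with $[K]=[H]$ (similarly $\widehat\FF[H]\cap H \neq \FF\cap H$, as $H$ itself lies in the left-hand side); so restricting a model for $E_{\widehat\FF[H]}G$ to $N_G[H]$ classifies the wrong, larger family. (b) Even for the correct family, restriction along a subgroup of typically infinite index destroys cocompactness and finite type, and the phrase ``$N_G[H]$ retracts suitably off the push-out'' is not an argument: Proposition \ref{lw-main} builds the push-out from its corners, and finiteness of the push-out cannot be read backwards to give finiteness of the corner spaces. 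So the hypotheses of Theorem \ref{comm-thm} are never verified, and indeed the whole point of Lemma \ref{fhat-lemma} is that it replaces those local hypotheses at $N_G[H]$ by global ones about $E_{\widehat\FF[H]}G$.

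The paper's proof bypasses Theorem \ref{comm-thm} entirely. After obtaining a cocompact $E_\FF G$ by the transitivity argument (your correct step), it observes that for distinct classes $[H_1]\neq[H_2]$ one has $\widehat\FF[H_1]\cap\widehat\FF[H_2]=\FF$ and $\widehat\FF[H_1]\cup\widehat\FF[H_2]=\widehat\FF[H_1,H_2]$, so the double mapping cylinder of $E_{\widehat\FF[H_1]}G \leftarrow E_\FF G \to E_{\widehat\FF[H_2]}G$ is a cocompact (finite type) model for $E_{\widehat\FF[H_1,H_2]}G$; iterating this inductively gives cocompact models for $E_{\widehat\FF[\mS]}G$ for every finite $\mS\subseteq I$, and since $\GG=\bigcup_{\mS}\widehat\FF[\mS]$ this is exactly a hierarchically cocompact model for $E_\GG G$. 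To repair your argument you would need to either adopt this mapping-cylinder route or add the $N_G[H]$-level finiteness assumptions, which would turn the lemma into (a weaker form of) Theorem \ref{comm-thm} rather than a statement deducible from your current hypotheses.
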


\begin{proof}  For any two families $\FF_1$ and $\FF_2$ of subgroups of $G$,  we have  unique maps (up to $G$-homotopy) $E_{\FF_1 \cap \FF_2}G \to E_{\FF_1}G$ and $E_{\FF_1 \cap \FF_2}G \to E_{\FF_2}G.$ The double mapping cylinder $X$ gives a model for $E_{\FF_1 \cup \FF_2}G,$  and that if $E_{\FF_1 \cap \FF_2} G$, $E_{\FF_1} G$ and 
$E_{\FF_2} G$ are cocompact (finite type), then so is $X.$

Now, let $[H_1] \neq [H_2] \in I$ and $\FF_1 = \widehat\FF[H_1],$ and $\FF_2 = \widehat\FF[H_2].$  Then, obviously $\FF_1 \cap \FF_2 = \FF$ and $\FF_1\cup \FF_2 = (\widehat\FF[H_1])[H_2] =: \widehat\FF[H_1,H_2].$ 
By assumption there are cocompact (finite type) models for $E_{\FF_1}G$ and $E_{\FF_2}G,$ and the transitivity result in Proposition \ref{trans} yields that there is also a cocompact (finite type) model for  $E_{\FF_1 \cap \FF_2}G$. The double mapping cylinder $X$ gives a cocompact (finite type) model for $E_{\FF_1 \cup \FF_2}G$.

For each $i>2$, let  ${\widehat\FF[H_1,...,H_i]}:= {\widehat\FF[H_1,...,H_{i-1}]}[H_i]$. Proceed by induction, to construct a cocompact (finite type) model for $E_{\widehat\FF[H_1,...,H_i]}G$ from cocompact  (finite type) models for $E_{\widehat\FF[H_1,...,H_{i-1}]}G$ and $E_{\widehat\FF[H_i]}G$ as above. 
\end{proof}

\section{Some examples}

In the next examples, let $\FF$ and $\Vc$ be the families of finite and virtually cyclic subgroups of a given group $G$, respectively, equipped with the commensurability relation on $\Vc \smallsetminus \FF$ (see Remark \ref{comm}).

\subsection{Thompson groups}  In \cite{martineznucinkis11} the authors introduced a similar condition for the family of finite subgroups, quasi-$\UF_\infty,$ which asks for a group to have, for any $k \in \Z_{>0},$  finitely many conjugacy classes of finite subgroups of order $k$, and that normalisers of all finite subgroups are of type $\finfty.$  \cite[Theorem 4.9]{mmn14} shows that generalised Thompson groups, which are automorphism groups of valid, bounded and complete Cantor-algebras are quasi-$\UF_\infty$ and hence are $h\UF_\infty.$

\subsection{Hyperbolic groups} Following \cite{lueckweiermann}, a group is said to satisfy $(NM_{\Ffin \subseteq \Vc}),$ if every virtually cyclic subgroup is contained in a unique maximal virtually cyclic group $V$, such that $N_G(V) =V.$ Then, by \cite[Corollary 2.11]{lueckweiermann}, the push-out in Proposition \ref{lw-main} reduces to 
$$\xymatrix{ G \times_{V}\underline{E}V  \ar[r] \ar[d] & \eg \ar[d] \\
G/V \ar[r]  &X,}$$
where $X$ is a cocompact model for $E_{\widehat{\Ffin}[V]}G$, provided there is a cocompact model for $\underline{E}G$. Now, apply
Lemma \ref{fhat-lemma}, to conclude that any group satisfying $(NM_{\Ffin \subseteq \Vc})$ that has a cocompact model for $\underline{E}G$  is of type $h\UUF.$
In particular, hyperbolic groups satisfy $(NM_{\Ffin \subseteq \Vc})$ \cite[Remark 7]{jpl}, and hence are of type $h\UUF.$

\subsection{Polycyclic groups} Polycyclic groups are of type $h\UUF.$ This follows from \cite[Lemma 5.15]{lueckweiermann} - for every infinite virtually cyclic group $V$, there exists a cyclic subgroup $C$ commensurate to $V$ such that $N_G[C]=N_G(C),$ and hence a model for $E_{\FF[V]}N_G[V]$ is given by $\underline{E}(N_G(C)/C)$ by pulling back the action. Now, since $G$ is polycyclic, we have a cocompact $\eg$ and a cocompact $\underline{E}(N_G(C)/C)$ for every $C$. Now apply Theorem \ref{lw-main}.

\subsection{Soluble Baumslag-Solitar groups} One can also show that the soluble Baumslag-Solitar groups $G=BS(1,n)$ are of type $h\UUF.$ The group $G$ is torsion-free and admits a cocompact $2-$dimensional model for $\eg=EG.$ Next, we show that for all cyclic subgroups $H \leq G$, we have a finite model $X_{\FF[H]}(N_G[H])$ for $E_{\FF[H]}N_G[H]$ and apply Theorem \ref{comm-thm}. 
Recall,
$$BS(1,n) \cong \Z[\tfrac1n] \rtimes \Z.$$

Any infinite cyclic subgroup $H$ not contained in $\Z[\tfrac1n],$ is equal to its commensurator, see \cite[Lemma 5]{fluch11}, and hence
$N_G[H]$ has a point as a model for $E_{\FF[H]}N_G[H],$ and $\R$ is a cocompact model for $\eg.$ Infinite cyclic subgroups of $ \Z[\tfrac1n]$, on the other hand, are all commensurate to $K=\langle \tfrac 1n\rangle \leq  \Z[\tfrac1n]$. $K$ has the entire  $G$ as its commensurator. Since $G$ is an HNN-extension with the vertex group $K$, the associated Bass-Serre tree is a cocompact model for $E_{\FF[K]}G.$ Applying Theorem \ref{comm-thm}, we obtain that $BS(1,n)$ is of type  $h\UUF$ for all $n>0$.

Note that when $n>1$, the normaliser $N_G(\langle \tfrac 1n \rangle) = \Z[\tfrac1n]$ is not even finitely generated. However, in all examples in this note, the commensurators of virtually cyclic subgroups are of type $\operatorname{F}_\infty.$ It is not clear whether this is just an artefact of our construction, and hence we ask, see Question \ref{qu}:

\begin{question*} Suppose a group $G$ is of type $h\UUF.$ Are commensurators of virtually cyclic subgroups of type $\operatorname{F}_\infty?$
\end{question*}

\section{Bredon cohomology}

In this section we introduce all necessary facts and results regarding Bredon cohomology needed later on to determine lower bounds for dimensions of our classifying spaces. All results in this section are well known; a good introduction to the subject can be found in \cite{fluch}.

\noindent
Let $\FF$ denote a family of  subgroups of a given group $G$ as before.
We consider the category $\OFG,$ which has as objects the transitive $G$-sets with stabilisers in $\FF$. Morphisms in $\OFG$ are $G$-maps between those $G$-sets.  Modules over the orbit category, called $\OFG$-modules are contravariant functors from the orbit category to the category of abelian groups.  Exactness is defined pointwise: a sequence
 $$A\to B\to C$$
  of $\OFG$-modules
is exact at $B$ if and only if the sequence of abelian groups
$$A(G/K)\to B(G/K)\to C(G/K)$$
is exact at $B(G/K)$ for every $G/K \in \OFG.$
The trivial $\OFG$-module, which is denoted $\Z(-)$, is the constant functor $\Z$ from $\OFG$ to the category of abelian groups.

\noindent
The category $\OFG$-Mod of $\OFG$-modules has  enough projectives. Hence we can consider a projective resolution
$$ P_*(-) \to \Z(-) \to 0$$
of the trivial module $\Z(-).$ 
The Bredon cohomology functors $\Ho^*_\FF(G,-)$ are defined as derived functors of
$\Hom_\FF(\Z(-),-)$.  In particular,   for each $N=N(-) \in \OFG$-Mod,
$$ \Ho^*_\FF (G,N) =H_*(mor(P_*,N)).$$

We can now define cohomological dimensions in the Bredon-setting with analogous properties to ordinary cohomology. 
Let $n \geq 0$. We say $G$ has Bredon-cohomological dimension $\cd_\FF G \leq n$ if there is a projective resolution of $\Z(-)$ of length $n.$ This is equivalent to $ \Ho^{n+1}_\FF (G,N)=0$ for all $\OFG$-modules $N.$

Let $X$ be a model for $\EFG$. Then the cellular chain complex $C_*(X)$ gives rise to a free resolution 
$$C_*(X)(-) \to \Z(-)\to 0$$
by putting $C_*(X)(G/K)=C_*(X^K)$ for all $K \in \OFG.$ In particular,
$$\cd_\FF G \leq \gd_\FF G.$$
Furthermore, \cite[Theorem 13.19]{lueckbook}, if $\cd_\FF G \geq 3,$ then $\gd_\FF G =\cd_\FF G.$

As before, for $\FF$ the family of finite groups, we write $\ucd G$ and $\ugd G$, and for $\FF$ the family of virtually cyclic subgroups we write $\uucd G$ and $\uugd G$ respectively.  Note that for torsion-free groups, $\ucd G =\cd  G$ and $\ugd G =\gd G$ and Bredon-cohomology for the family of finite subgroups becomes ordinary cohomology over the group ring $\Z G.$

We will quite often make use of this simple observation. When $G$ is virtually torsion-free, then $\ucd G\geq \vcd G$. So, if $G$ admits a model for $\ue G$ of dimension $\vcd G$, then it must necessary be of minimal dimension, i.e.~realising  $\ugd G$.

\section{Mapping class groups}

In this section, we prove our main theorem and show that the mapping class group of any compact orientable surface $S$ with possibly finitely many punctures and boundary components,  and with negative Euler characteristic $\chi(S)$ is of type  $h\UUF$ with a hierarchically cocompact model of dimension $\vcd \M(S)+1$. We recall some necessary background on mapping class groups of surfaces and refer the reader to \cite{FM} and \cite{ivanov} for further details.

Let $S$ be a connected compact oriented surface with finitely many punctures and $\chi(S)<0$. The {\it mapping class group} of $S$, denoted by $\M(S)$, is the group of isotopy classes of orientation  preserving diffeomorphisms of $S$ pointwise fixing the boundary $\partial S$ 
$$\M(S) = \mbox{Diff}^+(S,\partial S)/{\mbox{Diff}^0(S,\partial S)},$$
where $\mbox{Diff}^0(S, \partial S)$ is the subgroup of $\mbox{Diff}^+(S, \partial S)$ consisting of elements that are isotopic to the identity. 

Any diffeomorphism of $S$ induces an automorphism of $H_1(S, \Z/{m\Z})$ for $m\geq 2$. This gives a well-defined homomorphism 
$$\M(S)\to \mbox{Aut}(H_1(S, \Z/{m\Z}))$$ where the kernel is denoted by $\M(S)[m]$ and it is called the {\it level $m$ congruence subgroup} of $\M(S)$. 

Suppose now that $S$ has no boundary. Let $\{\alpha_1, \dots, \alpha_n\}$ be a collection of pairwise disjoint, homotopically distinct essential simple closed curves in $S$. Denote by $\sigma= \{[\alpha_1], \dots, [\alpha_n]\}$ the corresponding isotopy classes.

{\it Inclusion homomorphism}.~({\cite[Thm.~3.18]{FM}})\label{inclusion}. Let $N_{\sigma}$ be an open regular neighbourhood of $\cup_{j=1}^n\alpha_j$ in $S$. Denote $S_{\sigma}=S\smallsetminus N_{\sigma}$ and set $S_{\sigma}=\cup_{i=1}^k S_i$ where each $S_i$ is a connected subsurface. Let $\{\beta_1, \gamma_1\}, \dots, \{\beta_k, \gamma_k\}$ denote the pairs of boundary components of $S_{\sigma}$ that bound the annuli in $N_{\sigma}$.  The inclusion $S_{\sigma} \hookrightarrow S$ induces a homomorphism 
$$\eta_{\sigma}:\M(S_{\sigma})=\prod_{i=1}^k\M(S_i) \to \M(S)$$ 
with kernel  $\langle T_{\beta_1}T^{-1}_{\gamma_1}, \dots, T_{\beta_k}T^{-1}_{\gamma_k}\rangle$. The restriction $\eta_i:=\eta_{\sigma}|_{\M(S_i)}$ is the map induced by the inclusion $S_i\hookrightarrow S$.

{\it Capping homomorphism}.~({\cite[Prop.~3.19]{FM}})\label{capping} Let $\widehat {S}_{\sigma}$ be the surface obtained from $S_{\sigma}$ by capping the boundary components with once-punctured disks and write $\widehat {S}_{\sigma}=\cup_{i=1}^k \widehat{S}_i$. The inclusion $S_{\sigma} \hookrightarrow \widehat {S}_{\sigma}$ induces a homomorphism 
$$\theta_{\sigma}:\M(S_{\sigma}) \twoheadrightarrow \prod_{i=1}^k \M(\widehat S_i, \Omega_i)\subseteq \M(\widehat{S}_{\sigma})$$
with kernel  $\langle T_{\beta_1}, T_{\gamma_1}, \dots, T_{\beta_k}, T_{\gamma_k} \rangle$.  Here $\Omega_i$ denotes the set of punctures coming from the boundary components of $S_i$. The image of the restriction $\theta_i:=\theta_{\sigma}|_{\M(S_i)}$, denoted  $\M(\widehat S_i, \Omega_i)$, is the subgroup of $\M(\widehat{S}_i)$ consisting of all the elements that fix $\Omega_i$ pointwise.  $\M(\widehat S_i, \Omega_i)$ contains the pure mapping class group and hence is finite index in $\M(\widehat{S}_i).$  

{\it Cutting homomorphism}.~({\cite[Prop.~3.20]{FM}})\label{cutting} Define $\M(S)_{\sigma}=\{ g\in \M(S) \;|\; g(\sigma)=\sigma\}$. There is a well-defined homomorphism
$$\rho_{\sigma} : \M(S)_{\sigma}\to \M(S \smallsetminus \cup_{j=1}^n\alpha_j)= \M(\widehat{S}_{\sigma})$$
with free abelian kernel  $\langle T_{\alpha_1}, \dots, T_{\alpha_n}\rangle$ generated by the Dehn twists about the curves $\alpha_1, \dots, \alpha_n$. Let $\M(S)_{\sigma}^0$ be the finite index subgroup of $\M(S)_{\sigma}$ consisting of all the elements that fix each curve $\alpha_i$ with orientation. Denote the restriction $\rho_{\sigma, 0}=\rho_{\sigma}|_{\M(S)_{\sigma}^0}$. Then $\theta_{\sigma}=\rho_{\sigma, 0}\circ \eta_{\sigma}$ (see the diagram on page 91 of \cite{FM}).

{\it The canonical form}.~({\cite[Cor.~13.3]{FM}})\label{canonical} Let $f\in \M(S)$ and let $\sigma=\sigma(f)$ be its canonical reduction system (see \cite[\S13.2.2]{FM}). Let $S_{k+1},\dots, S_{k+n}$ be the pairwise disjoint annuli that are the closed neighbourhoods of the curves $\alpha_1, \dots, \alpha_n$ representing $\sigma$. Then there is a representative $\phi$ of $f$ that permutes the $S_l$, so that some power of $\phi$ leaves invariant each $S_l$, $1\leq l \leq n+k$. By applying Nielsen-Thurston Classification Theorem to each $S_l$, one obtains that  there exists $p > 0$  so that $\phi^p(S_l) = S_l$ for all $1\leq l \leq n+k$ and
\begin{equation}\label{form} f^p = \prod_{i=1}^{k}\eta_i(f_i)\prod_{j=1}^{n}T_{\alpha_j}^{n_j}\end{equation}
where each  $f_i\in \M(S_i)$  is either pseudo-Anosov or the identity and $n_j\in \N$ for $1\leq i\leq k$, $1 \leq  j \leq n$.

\begin{remark}\label{ivanov} By a result of Ivanov \cite[Corollary 1.8]{ivanov}, if $f\in \M(S)[m]$, $m\geq 3$, then the integer $p$ in (\ref{form}) can always be taken to be one. \end{remark}

\begin{remark}[{\cite[3.10]{JT}}]\label{ivanov2} By Theorem 1.2 in \cite{ivanov}, $\M(S)[m]_{\sigma}\subseteq \M(S)_{\sigma}^0$. Since $\theta_{\sigma}=\rho_{\sigma, 0}\circ \eta_{\sigma}$, it follows that $\theta_i(f_i)=\hat{f}_i$ for $1\leq i \leq k$.
\end{remark}

For simplicity, we will often denote $G=\M(S)$.  Let $\FF$ and $\Vc$ the families of finite and virtually cyclic subgroups of $G$, respectively, equipped with the commensurabilty relation on $\Vc \smallsetminus \FF$ (see Remark \ref{comm}). The Teichm\"uller space $\mathcal T(S)$ is a CAT(0)-space on which $G$ acts properly and cocompactly.  So, it gives a cocompact model for $\underline{E}G$.  \cite[Corollary 1.3]{AM} of Aramayona and Mart\'\i nez-P\'erez states that there is a cocompact model for $\underline{E}G$ of minimal dimension $\ugd G =\vcd G$. We need the following minor generalisation of this result.

\begin{proposition}\label{non-conn} Let $S$ be a compact (possibly disconnected) surface with possible punctures and boundary components. Then there is a cocompact model for $\ue \M(S)$ of dimension $\ugd \M(S) =\vcd \M(S)$.
\end{proposition}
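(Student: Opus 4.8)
The plan is to reduce the disconnected case to the connected case handled by Aramayona–Martínez-Pérez \cite[Corollary 1.3]{AM}, together with some elementary bookkeeping about products of groups and wreath products. Write $S = S^{(1)} \sqcup \dots \sqcup S^{(r)}$ as a disjoint union of connected compact surfaces, grouped so that $S^{(i)}$ and $S^{(j)}$ are diffeomorphic precisely when they lie in the same group. For a single diffeomorphism type occurring $m$ times, the mapping class group of that part of $S$ is a wreath product $\M(S_0)\wr \Sigma_m = \M(S_0)^m \rtimes \Sigma_m$, where $S_0$ is the connected model and $\Sigma_m$ permutes the factors; overall $\M(S)$ is a direct product of such wreath products over the distinct diffeomorphism types appearing in $S$. (One should here restrict to components with $\chi < 0$, or otherwise note separately that $\M$ of a disc, annulus, etc.\ is trivial or virtually cyclic and contributes nothing to the dimension count; I would state at the outset that all components have negative Euler characteristic, which is the only case needed for Theorem \ref{main}.)

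The key construction is: if $H$ is a finite-index subgroup of $G$ and $X$ is a cocompact model for $\ue H$, then $G\times_H X$ — or more usefully, a cocompact $\ue G$ built by induction — realizes $\ugd G$, and products and finite extensions behave well. Concretely, first I would recall that for a finite group $Q$ acting on a group $N$, and a cocompact model $Y$ for $\ue N$ which is $Q$-equivariant (which we can arrange by replacing $Y$ with $\prod_{q\in Q} Y$ with the permutation/diagonal action, or by the standard fact that finite-index overgroups have models of the same dimension), the space $Y$ is also a cocompact model for $\ue (N\rtimes Q)$ of the same dimension, since every finite subgroup of $N\rtimes Q$ is subconjugate to one meeting $N$ in a finite subgroup and the fixed-point sets remain contractible. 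Applied with $N = \M(S_0)^m$, $Q = \Sigma_m$ and $Y = \mathcal T(S_0)^m$ (or the minimal-dimensional model from \cite{AM} raised to the $m$-th power, with $\Sigma_m$ permuting coordinates), this gives a cocompact model for $\ue(\M(S_0)\wr\Sigma_m)$ of dimension $m\cdot\vcd\M(S_0)$. Taking the product of these over the distinct diffeomorphism types yields a cocompact model for $\ue\M(S)$ of dimension $\sum_{\text{components}} \vcd\M(S^{(i)}) = \vcd\M(S)$, the last equality because $\M(S)$ is virtually the product of the pure mapping class groups of the components and $\vcd$ is additive over direct products of virtually torsion-free groups.

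Finally, to conclude $\ugd\M(S) = \vcd\M(S)$ rather than merely $\ugd \le \vcd$, I invoke the observation recorded just before the Proposition: $\M(S)$ is virtually torsion-free, so $\ucd\M(S)\ge\vcd\M(S)$, hence $\ugd\M(S)\ge\vcd\M(S)$, and the model just constructed achieves this lower bound.

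The main obstacle, and the only genuinely non-formal point, is making the $\Sigma_m$-equivariance of the model for $\ue\M(S_0)^m$ precise and checking that passing to the semidirect product with $\Sigma_m$ does not increase the dimension: one must verify that for every finite subgroup $F\le \M(S_0)\wr\Sigma_m$ the fixed-point set $(\mathcal T(S_0)^m)^F$ is contractible (or empty for non-finite subgroups). This follows because such an $F$ acts on the $m$ coordinates with some orbit decomposition, its fixed set is a product over the orbits of fixed sets of finite subgroups of mapping class groups of connected surfaces acting on the corresponding Teichmüller spaces, and each such factor is contractible by the CAT(0) (or the Kerckhoff/Nielsen realization) property; a product of contractible spaces is contractible. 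Everything else is routine.
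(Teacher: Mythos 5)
Your proposal follows essentially the same route as the paper: decompose $S$ by diffeomorphism type, observe that $\M(S)$ sits inside $\prod_i \M(S_i)\wr\Sigma_{m_i}$, take the $m$-fold product of the minimal-dimensional cocompact models of \cite{AM} with $\Sigma_m$ permuting the factors (this is exactly Lemma \ref{fin_wr}), and conclude minimality from $\vcd\M(S)\le\ucd\M(S)\le\ugd\M(S)$. Two points where the paper is more careful deserve mention. First, $\M(S)$ is in general only a finite-index subgroup of the product of wreath products, not equal to it (mapping classes fix $\partial S$ pointwise, so components with boundary cannot be permuted); this is harmless, since restricting a cocompact model for the overgroup to a finite-index subgroup gives a cocompact model of the same dimension, but it should be stated. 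Second, and more substantively, your appeal to ``$\vcd$ is additive over direct products of virtually torsion-free groups'' is not a theorem: only the inequality $\vcd(G_1\times G_2)\le\vcd G_1+\vcd G_2$ is formal, and the reverse inequality is precisely what you need to identify the dimension of your model with $\vcd\M(S)$ and hence to obtain minimality. The paper gets $\vcd\M(S)=\sum_i m_i\,\vcd\M(S_i)$ from Harer's theorem that mapping class groups are virtual duality groups \cite{harer} combined with the Bieri--Eckmann extension theorem for duality groups \cite{BE}; some such input (duality, or nonvanishing of top cohomology with group-ring coefficients fed into a K\"unneth argument) must replace the blanket additivity claim. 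Finally, your worry about fixed-point contractibility in the wreath-product step is unnecessary: since the factor $X$ is already a model for $\ue\M(S_0)$, the fixed set of a finite subgroup of $\M(S_0)\wr\Sigma_m$ in $X^m$ is a product, over the orbits on the coordinates, of fixed sets of finite subgroups of $\M(S_0)$ in $X$, hence contractible by definition of $\ue$ --- no CAT(0) or Nielsen realization input is needed.
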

\begin{proof} In \cite{AM}, this has been proven for connected $S$. Suppose $S$ decomposes into the disjoint union of diffeomorphic copies of its connected components 
$$S=\sqcup^{m_1}  S_{1}\dots \sqcup^{m_q} S_{q}.$$ 
    Subsequently, this implies that  $\M(S)\subseteq \prod_{i=1}^q\M(S_{i})\wr \Sigma_{m_i}$ and is of finite index. Combining  \cite[Theorem 4.1]{harer} and the extension theorem for duality groups in \cite[Thm.~3.5]{BE}, gives us that $$\vcd \M(S)=\sum_{i=1}^q m_i\cdot \vcd \M(S_i).$$ The next lemma shows that there exists a cocompact model for $\ue \M(S)$ of dimension $\vcd \M(S)$. Since $\vcd \M(S)\leq \ugd \M(S)$, the result follows.
\end{proof}

\begin{lemma}\label{fin_wr}  Let  $K$ be a group.  Suppose $K$ has a cocompact model $X$ for $\ue K$. Then the wreath product $W=K\wr \Sigma_m$ has a cocompact model for $\ue W$ of dimension $m\cdot \dim X$.
\end{lemma}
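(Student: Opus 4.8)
The natural strategy is to build a model for $\ue W$ directly from the given model $X$ for $\ue K$, using the $m$-fold product $X^m$ as a starting point. The group $W = K \wr \Sigma_m = K^m \rtimes \Sigma_m$ acts on $X^m$: the factor $K^m$ acts coordinatewise, and $\Sigma_m$ acts by permuting the coordinates. First I would check that $X^m$, with this $W$-action, is a $W$-CW-complex of dimension $m \cdot \dim X$ that is cocompact (a finite product of cocompact complexes is cocompact, and the $\Sigma_m$-action only permutes cells, so there are still finitely many $W$-orbits of cells). Then I would verify the fixed-point criterion: for $H \leq W$, the subspace $(X^m)^H$ must be contractible if $H$ is finite and empty otherwise.

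\textbf{Key steps.} For the fixed-point analysis, let $H \leq W$ and let $H_0 = H \cap K^m$, a normal subgroup of $H$ with $H/H_0$ embedding into $\Sigma_m$; so $H$ is finite if and only if $H_0$ is finite. If $H$ is infinite then $H_0$ is infinite, and since $H_0 \leq K^m$ acts coordinatewise, some coordinate projection $p_j(H_0) \leq K$ is infinite; then $(X^m)^{H_0} \subseteq X^{p_1(H_0)} \times \cdots \times X^{p_m(H_0)}$ has an empty $j$-th factor, so $(X^m)^H \subseteq (X^m)^{H_0} = \emptyset$. If $H$ is finite, I would argue contractibility of $(X^m)^H$ in two stages. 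First, $(X^m)^{H_0} = \prod_{j=1}^m X^{p_j(H_0)}$ is a product of contractible spaces (each $p_j(H_0)$ is a finite subgroup of $K$ and $X$ is a model for $\ue K$), hence contractible; moreover the residual action of the finite group $Q = H/H_0$ on this product is by cellular homeomorphisms. Second, I must show that taking $Q$-fixed points of this contractible complex again gives a contractible complex: $(X^m)^H = ((X^m)^{H_0})^{Q}$. The cleanest way is to observe that $Q$ acts on $(X^m)^{H_0}$ by permuting coordinates within $\Sigma_m$-orbits (after conjugating $H_0$ to a "block form") and to identify the fixed set explicitly as a product, over the orbits of $Q$ on $\{1,\dots,m\}$, of diagonal copies of the relevant $X^{p_j(H_0)}$ — or, more robustly, to invoke the fact that $X^m$ is a model for $\ue(K^m \rtimes \Sigma_m)$ already known in the literature, or to cite that a finite group acting on a contractible CW-complex with the property that all cell stabilisers fix the cell pointwise has contractible fixed-point set when the complex is built $\ue$-style. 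In fact the slick route is: $X^m$ is a model for $E_{\underline{\mathcal F}} W$ where $\underline{\mathcal F}$ is the family of subgroups $H$ with $H \cap K^m$ finite; but $H \cap K^m$ finite already forces $H$ finite, so $\underline{\mathcal F}$ is exactly the family of finite subgroups of $W$, and we are done.

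\textbf{Main obstacle.} The delicate point is the second stage: verifying that $(X^m)^H$ is contractible (not merely nonempty) when $H$ is finite but not contained in $K^m$, i.e.\ controlling the $\Sigma_m$-part of the action on fixed-point sets. I expect to handle this by choosing coordinates so that $H_0$ has block-diagonal form adapted to the $Q$-orbits, writing $(X^m)^{H_0}$ as a product over $Q$-orbits of factors of the form $(Y^{r})$ with $Q$ acting on the $r$ coordinates transitively through a subgroup of $\Sigma_r$ (where $Y = X^{L}$ for the appropriate finite $L \leq K$), and then noting that the $Q$-fixed points of a transitive coordinate-permutation action on $Y^r$ form a diagonal copy of $Y^{L'}$ for a suitable finite $L' \leq K$ — hence contractible — so the whole product is contractible. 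Alternatively, and most economically, I would simply cite the standard fact that if $X$ is a cocompact model for $\ue K$ then $X^m$ is a cocompact model for $\ue(K \wr \Sigma_m)$ (this is folklore, e.g.\ it follows from \cite[Theorem~5.16]{lueckbook}-type arguments on products and the observation above that $H \cap K^m$ finite $\iff$ $H$ finite), giving the dimension bound $m \cdot \dim X$ immediately.
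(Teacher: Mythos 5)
Your proposal uses exactly the same construction as the paper: the $m$-fold product $X^m$ with $K^m$ acting coordinatewise and $\Sigma_m$ permuting the factors, which is cocompact of dimension $m\cdot\dim X$. The paper simply asserts that this is a model for $\ue W$, whereas you spell out the fixed-point verification (including the twisted-diagonal analysis for finite subgroups not contained in $K^m$), which is correct and fills in the detail the paper leaves implicit.
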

\begin{proof} The wreath product $W$ acts on  $Y=X\times \dots \times X$ which consists of $m$-copies of $X$ with a diagonal action of $K^m$ and  a permutation action of $\Sigma_m$. Note that $W$ acts cocompactly on $Y$ and it is a model for $\ue W$ of dimension $m\cdot \dim X$.
\end{proof}

To establish Theorem \ref{main},  it remains to show, see Theorem \ref{comm-thm}, that for each infinite cyclic subgroup $H \subseteq G$,  there are cocompact models for $E_{\FF\cap N_G[H]}N_G[H]$ and $E_{\FF[H]}N_G[H]$ where $N_G[H]$ is the commensurator of $H$ in $G$ of the required dimensions.

\begin{proposition}[{\cite[Prop.~4.8]{JT}}]\label{pin} Let $S$ be an orientable closed surface with finitely many punctures and $\chi(S)< 0$. Suppose $f\in G$ generates an infinite cyclic subgroup $H$. Then for any integer $l>0$ such that $f^l\in \M(S)[m]$, $m\geq 3$, $N_G[H]=N_G(f^l)$ holds.
\end{proposition}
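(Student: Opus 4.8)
The goal is to show that for an infinite cyclic $H = \langle f\rangle \leq G = \M(S)$ (with $S$ closed with punctures, $\chi(S)<0$), the commensurator $N_G[H]$ equals the ordinary centraliser-type normaliser $N_G(f^l)$ for a suitable power $f^l$ lying in a torsion-free congruence subgroup. The plan is to pass to a power of $f$ that is ``well-behaved'' in the sense of Ivanov's work (Remark \ref{ivanov}), so that its canonical reduction system and its Nielsen--Thurston components are genuinely invariant rather than merely invariant up to a permutation, and then to show that the commensurability class of such an element is rigid.

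First I would choose $l>0$ so that $f^l \in \M(S)[m]$ with $m \geq 3$; by Remark \ref{ivanov} the canonical form (\ref{form}) holds for $f^l$ with $p=1$, i.e. $f^l = \prod_i \eta_i(f_i)\prod_j T_{\alpha_j}^{n_j}$ with each $f_i$ pseudo-Anosov or trivial on $S_i$, and with the curve system $\sigma = \sigma(f^l) = \sigma(f)$ (the canonical reduction system is invariant under taking powers). Then $N_G[H]$ clearly contains $N_G(f^l)$ (and more), so one inclusion is trivial. For the reverse inclusion, take $g \in N_G[H]$, so $g H g^{-1}$ is commensurable with $H$; then $g f^{l'} g^{-1} = f^{\pm l''}$ for suitable positive $l', l''$, and after enlarging $l$ I may assume $g (f^l) g^{-1} = (f^l)^{\pm 1}$ is a power of $f^l$ that again lies in $\M(S)[m]$ (passing to a further common power, using that $\M(S)[m]$ is normal). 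The key point is then that $g$ must conjugate the canonical reduction system $\sigma$ of $f^l$ to itself (canonical reduction systems are topological invariants, so $\sigma(g f^l g^{-1}) = g\cdot \sigma$, and $g f^l g^{-1}$ is a power of $f^l$ hence has the same canonical reduction system $\sigma$). Thus $g \in \M(S)_\sigma$, and in fact $g \in \M(S)[m]_\sigma \subseteq \M(S)^0_\sigma$ by Remark \ref{ivanov2}, so $g$ preserves each $\alpha_j$ with orientation.

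Next I would analyse the action of such a $g$ through the cutting homomorphism $\rho_{\sigma,0}\colon \M(S)^0_\sigma \to \M(\widehat S_\sigma)$ with kernel the free abelian group $\langle T_{\alpha_1},\dots,T_{\alpha_n}\rangle$. Conjugating the relation $f^l = \prod_i \eta_i(f_i)\prod_j T_{\alpha_j}^{n_j}$ by $g$ and applying $\rho_{\sigma,0}$ kills the Dehn-twist part, leaving that the tuple of pseudo-Anosov/identity pieces $(\hat f_i)$ on the components $\widehat S_i$ is, up to the permutation of components induced by $g$, conjugate to itself (or its inverse). Since $g$ fixes each $\alpha_j$ with orientation it fixes the set of complementary components setwise; a pseudo-Anosov component cannot be carried to a trivial one, and on each pseudo-Anosov piece the centraliser of a pseudo-Anosov element in the mapping class group of that subsurface is virtually cyclic (indeed the pseudo-Anosov has a unique axis / pair of foliations, so its commensurator in $\M(\widehat S_i,\Omega_i)$ is already its normaliser there). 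Combining these observations componentwise, $\rho_{\sigma,0}(g)$ commutes with $\rho_{\sigma,0}(f^l)$, i.e. $[g, f^l] \in \langle T_{\alpha_1},\dots,T_{\alpha_n}\rangle$; a further short argument (using that $g$ fixes the $\alpha_j$, so conjugation by $g$ fixes each $T_{\alpha_j}$, together with the structure of $f^l$) forces $[g,f^l]=1$, i.e. $g \in N_G(f^l)$ after a final adjustment of $l$.

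I expect the main obstacle to be the middle step: controlling how $g$ acts on the pseudo-Anosov components and promoting ``commensurable with $\langle f\rangle$'' to ``commutes with a fixed power of $f$''. This requires the uniqueness of the canonical reduction system under powers and conjugation (Ivanov), the fact that the centraliser of a pseudo-Anosov element in a (finite-punctured) surface mapping class group is virtually cyclic with no proper commensuration, and careful bookkeeping of the Dehn-twist kernel and the finitely many subsurface components to ensure only finitely many powers of $f$ arise, so that a single $l$ works uniformly. The boundary/puncture bookkeeping via the capping and cutting homomorphisms (and Remark \ref{ivanov2}) is the technical glue; I would lean on \cite{JT} and \cite{ivanov} for the precise statements. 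Once these are in place, the equality $N_G[H] = N_G(f^l)$ follows.
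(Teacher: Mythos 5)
The paper does not prove this statement at all: it is quoted verbatim from \cite[Prop.~4.8]{JT}, so your attempt has to be judged against the standard argument (Ivanov's rigidity of pure mapping classes), and measured against that there are genuine gaps. The entire content of the proposition sits in the step you dispatch with ``after enlarging $l$ I may assume $g(f^l)g^{-1}=(f^l)^{\pm1}$''. From $gHg^{-1}$ commensurable with $H$ you only get $gf^ag^{-1}=f^{\pm b}$ for some $a,b>0$; you must then prove (a) $a=b$, i.e.\ no infinite-order mapping class is conjugate to a proper power of itself (an argument via conjugation invariants of the canonical form -- dilatations of the pseudo-Anosov pieces and the twisting exponents, which scale with the power), and (b) a descent from the exponent $la$ back to the \emph{given} exponent $l$, which is exactly Ivanov's root/power rigidity for pure elements: if $x,y\in\M(S)[m]$, $m\geq 3$, and $x^s=y^s$ with $s\neq 0$, then $x=y$ (applied to $x=gf^lg^{-1}$ and $y=f^{\pm l}$, both pure since $\M(S)[m]$ is normal). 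Note also that ``enlarging $l$'' is not available to you: the proposition claims the equality for every $l$ with $f^l\in\M(S)[m]$, and returning from a larger exponent to $l$ again requires (b). Once (a) and (b) are in hand you are finished -- $gf^lg^{-1}=f^{\pm l}$ gives $g\in N_G(f^l)$ directly -- and your entire second half (the cutting-homomorphism analysis) is superfluous.

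That second half also contains two incorrect steps worth flagging. First, you place $g$ in $\M(S)[m]_\sigma\subseteq\M(S)^0_\sigma$ and conclude it fixes each reduction curve with orientation; but $g$ is an arbitrary element of $N_G[H]$, not of the congruence subgroup (only suitable powers of $f$ are), so Remark \ref{ivanov2} does not apply to it and $g$ may well permute the curves of $\sigma$ and the complementary components. Second, the asserted conclusion $[g,f^l]=1$ is not forced and is false in general: the commensurator (and the normaliser $N_G(\langle f^l\rangle)$) can contain elements with $gf^lg^{-1}=f^{-l}$, e.g.\ when a pseudo-Anosov piece is conjugate to its inverse. For the proposition only $gf^lg^{-1}=f^{\pm l}$ is needed, so the overshoot is repairable, but as written the componentwise argument claims centralisation where only normalisation of $\langle f^l\rangle$ can be, and needs to be, obtained.
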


We need the following slight generalisation of   \cite[Proposition 4.12]{JT} of Juan-Pineda and Trufilo-Negrete.

\begin{proposition}\label{sequence} Suppose $S$  is an orientable closed surface with finitely many punctures and $\chi(S)< 0$. Let $f\in \M(S)[m]$, $m\geq 3$,  with the canonical reduction system $\sigma$ with $\hat f_{a+1},  \dots , \hat f_{k}$ pseudo-Anosov and
$$\rho_{\sigma}(f):=\left(id_1, \dots, id_a, \hat f_{a+1},  \dots , \hat f_{k}\right)\in \prod_{i=1}^k \M(\widehat S_i, \Omega_i),$$ 
then there is a central extension
\begin{equation}\label{restrict_cent} 1\to \Z^n\to C_{\M(S)}(f)^0{\xrightarrow{\rho_{\sigma}}}\prod_{i=1}^a \M(\widehat S_i, \Omega_i)\prod_{j=a+1}^k V_j \to 1.\end{equation}
where $V_j=C_{\M(\widehat S_j, \Omega_j)}(\hat f_{j})$ is virtually cyclic for each $a+1\leq j\leq k$.
Also,
\begin{equation}\label{norm}
1\to \Z^n\to N_{\M(S)}(f){\xrightarrow{\rho_{\sigma}}} Q\to 1,
\end{equation}
such that $Q\subseteq \M(\sqcup_{i=1}^a \widehat S_i)\times A$,  where $A\subseteq N_{\M(\sqcup_{j=a+1}^k \widehat S_k)}((\hat f_{a+1},  \dots , \hat f_{k}))$ is a finite extension of $\prod_{j=a+1}^k V_j$.
\end{proposition}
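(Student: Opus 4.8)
The plan is to bootstrap Proposition \ref{sequence} from the existing machinery of \cite{FM} recalled above, namely the cutting homomorphism $\rho_\sigma$ and the canonical form \eqref{form}, together with Ivanov's Remarks \ref{ivanov} and \ref{ivanov2}. Since $f\in\M(S)[m]$ with $m\geq 3$, Remark \ref{ivanov} lets us take $p=1$ in \eqref{form}, so $f$ itself fixes each curve $\alpha_j$ of its canonical reduction system $\sigma$ with orientation; hence $f\in\M(S)^0_\sigma$ and by Remark \ref{ivanov2} we have $\theta_i(f_i)=\hat f_i$. First I would restrict the cutting exact sequence $1\to\Z^n\to\M(S)^0_\sigma\xrightarrow{\rho_\sigma}\M(\widehat S_\sigma)\to 1$ to the centralizer $C_{\M(S)}(f)^0 := C_{\M(S)}(f)\cap\M(S)^0_\sigma$. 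An element $g$ commuting with $f$ preserves the canonical reduction system $\sigma(f)=\sigma$, so $C_{\M(S)}(f)\subseteq\M(S)_\sigma$, and $C_{\M(S)}(f)^0$ has finite index in $C_{\M(S)}(f)$. Under $\rho_\sigma$ the image of $g\in C_{\M(S)}(f)^0$ must centralize $\rho_\sigma(f)=(id_1,\dots,id_a,\hat f_{a+1},\dots,\hat f_k)$ inside $\prod_{i=1}^k\M(\widehat S_i,\Omega_i)$ (after possibly refining to keep the subsurfaces, not just the curves, fixed — this is where one invokes that $g$ actually preserves each $\widehat S_i$ up to the level-$m$ condition). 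Since the $\hat f_i$ with $i\leq a$ are trivial and those with $j\geq a+1$ are pseudo-Anosov, the centralizer splits as $\prod_{i=1}^a\M(\widehat S_i,\Omega_i)\times\prod_{j=a+1}^k C_{\M(\widehat S_j,\Omega_j)}(\hat f_j)$, and $C_{\M(\widehat S_j,\Omega_j)}(\hat f_j)=V_j$ is virtually cyclic by the standard fact (McCarthy, Ivanov) that centralizers of pseudo-Anosov elements are virtually cyclic. The kernel is the restriction of $\Z^n=\langle T_{\alpha_1},\dots,T_{\alpha_n}\rangle$, which is central in $C_{\M(S)}(f)^0$ since the $T_{\alpha_j}$ commute with everything preserving the $\alpha_j$; one also checks the map onto the product is surjective by lifting centralizing elements and correcting by twists, giving \eqref{restrict_cent}.

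For the normalizer sequence \eqref{norm}, the argument is parallel but one works with $N_{\M(S)}(f)$, or more precisely $N_{\M(S)}(f)^0 := N_{\M(S)}(f)\cap\M(S)^0_\sigma$, which is again finite index. The image $Q=\rho_\sigma(N_{\M(S)}(f))$ now normalizes, rather than centralizes, the tuple $\rho_\sigma(f)$; an element normalizing $\langle f\rangle$ sends $f$ to $f^{\pm1}$, and on the pseudo-Anosov blocks normalizing each $\hat f_j$ lands one in $N_{\M(\widehat S_j,\Omega_j)}(\hat f_j)$, so on the product of pseudo-Anosov pieces we land in $A\subseteq N_{\M(\sqcup_{j=a+1}^k\widehat S_j)}((\hat f_{a+1},\dots,\hat f_k))$, while on the trivial blocks there is no constraint beyond preserving the subsurface decomposition, giving the $\M(\sqcup_{i=1}^a\widehat S_i)$ factor. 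That $A$ is a finite extension of $\prod_{j=a+1}^k V_j$ follows because $N_{\M(\widehat S_j,\Omega_j)}(\hat f_j)$ contains $C_{\M(\widehat S_j,\Omega_j)}(\hat f_j)=V_j$ with index at most $2$ (the quotient injects into $\operatorname{Aut}(\langle\hat f_j\rangle)$), together with the finitely many permutations of diffeomorphic pseudo-Anosov blocks. The kernel is still the central $\Z^n$ of Dehn twists. The passage between $N_{\M(S)}(f)$ and $N_{\M(S)}(f)^0$ only changes things by finite index, which is absorbed into ``$Q$'' and ``$A$ is a finite extension'', so I would state \eqref{norm} for the full normalizer and note that restricting to the index-finite subgroup $\M(S)^0_\sigma$ is harmless.

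The main obstacle I anticipate is the bookkeeping needed to pass from ``$g$ preserves the isotopy classes $\sigma$'' to ``$g$ preserves the subsurface decomposition $\widehat S_\sigma=\cup\widehat S_i$ and permutes/fixes its pieces compatibly'', i.e.\ making sure $\rho_\sigma$ genuinely maps $C_{\M(S)}(f)^0$ (resp.\ $N_{\M(S)}(f)^0$) into the product $\prod_{i=1}^k\M(\widehat S_i,\Omega_i)$ rather than into the larger $\M(\widehat S_\sigma)$ with its block-permuting symmetries, and controlling exactly which permutations of diffeomorphic components can occur. This is precisely the point where the level-$m$ hypothesis and Ivanov's rigidity results (Remarks \ref{ivanov}, \ref{ivanov2}) do the work: $f\in\M(S)[m]$ forces $f$ to fix each $\alpha_j$ and each $\widehat S_i$ on the nose, and an element centralizing or normalizing $f$ inherits enough of this structure. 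A secondary point requiring care is verifying surjectivity of $\rho_\sigma$ onto the asserted targets $\prod_{i=1}^a\M(\widehat S_i,\Omega_i)\prod_{j=a+1}^k V_j$ and onto $Q$: given a centralizing (resp.\ normalizing) element of the quotient one must lift it to $\M(S)_\sigma$ and check the lift still centralizes (resp.\ normalizes) $f$, which works because the only ambiguity in the lift is multiplication by elements of the central kernel $\Z^n$. Everything else is a direct adaptation of \cite[Proposition 4.12]{JT}, whose proof handles the case $a=0$ (all blocks pseudo-Anosov); our generalization simply carries the trivial blocks $\widehat S_1,\dots,\widehat S_a$ along unchanged.
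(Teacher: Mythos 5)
Your overall architecture matches the paper's (restrict the cutting sequence, cite/adapt \cite[Prop.~4.12]{JT} for the centraliser sequence, note the kernel of Dehn twists is central, pass to the normaliser where the image is a finite extension of the centraliser's image), but the one genuinely new step of this proposition is exactly the point you leave unproved. For the sequence (\ref{norm}) you assert that a normalising element acts "on the pseudo-Anosov blocks" and "on the trivial blocks" separately, i.e.\ that $Q\subseteq \M(\sqcup_{i=1}^a \widehat S_i)\times \M(\sqcup_{j=a+1}^k \widehat S_j)$; this is precisely what has to be shown, and your proposed mechanism --- the level-$m$ hypothesis and Ivanov's rigidity --- does not deliver it. Those results constrain $f$ (they give $p=1$ in the canonical form and $\theta_i(f_i)=\hat f_i$), not the normalising element $g$, which need not lie in $\M(S)[m]$ and genuinely can permute components: permutations among the trivial blocks and among diffeomorphic pseudo-Anosov blocks do occur and are allowed by the statement (that is why $A$ is only a finite extension of $\prod V_j$). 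What must be excluded is only a permutation mixing a trivial block with a pseudo-Anosov block, and the paper does this by a short dynamical observation you are missing: if $g\in\M(\widehat S_\sigma)$ normalises $\rho_{\sigma}(f)$ and maps $\widehat S_1$ (a trivial block) onto $\widehat S_k$ (a pseudo-Anosov block), then for $x\in\widehat S_1$ one has $\rho_{\sigma}(f)g(x)=g\rho_{\sigma}(f)^{\pm1}(x)=g(x)$, forcing $\hat f_k$ to be the identity on $\widehat S_k$, a contradiction. Without this (or an equivalent) argument, the containment $Q\subseteq \M(\sqcup_{i=1}^a \widehat S_i)\times A$ is unsupported.

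A secondary soft spot: for the centraliser sequence (\ref{restrict_cent}) your surjectivity argument ("lift and correct by twists, since the only ambiguity in the lift is central") is not sufficient as stated. If $\tilde g$ is a lift of a centralising element, then $[\tilde g,f]$ is a multitwist in $\Z^n$, and multiplying $\tilde g$ by central elements does not change this commutator, so centrality of the kernel alone cannot make it vanish; one needs the finer analysis (uniqueness of the canonical form for level-$m$ elements) carried out in \cite[Prop.~4.12]{JT}. The paper sidesteps this entirely by quoting that proposition for (\ref{restrict_cent}) and only adding the remark that the kernel is central because the twists $T_{\alpha_j}$ are fixed by $C_{\M(S)}(f)^0$; if you prefer to re-derive it, you should import that argument rather than rely on the lifting ambiguity.
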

\begin{proof}  The extension (\ref{restrict_cent}) has already been established in   \cite[Proposition 4.12]{JT}.  The kernel is generated by the Dehn twists about the curves $\alpha_1, \dots, \alpha_n$ which are fixed by $C_{\M(S)}(f)^0$, and hence is central.

By Lemma 3.8 of \cite{JT}, for every $g\in N_{\M(S)}(f)$, we have $g(\sigma)=\sigma$. Therefore, $N_{\M(S)}(f)\subseteq \M(S)_{\sigma}$. Let $Q=\rho_{\sigma}(N_{\M(S)}(f))$. Note that $Q$ is a finite extension of $\prod_{i=1}^a \M(\widehat S_i, \Omega_i)\prod_{j=a+1}^k V_j$ and it is contained in $N_{\M(\widehat S_\sigma)}(\rho_{\sigma}(f))$. So, to obtain (\ref{norm}), it suffices to show that  any $g\in \M(\widehat S_{\sigma})$ that normalises $\rho_{\sigma}(f)$ is contained in the subgroup $\M(\sqcup_{i=1}^a \widehat S_i)\times \M(\sqcup_{j=a+1}^k \widehat S_k)$ of $\M(\widehat S_{\sigma})$.

Suppose this is not the case, and  say $g$ maps $\widehat S_1$ diffeomorphically onto $\widehat S_k$. Then for any $x\in \widehat S_1$, we have $\rho_{\sigma}(f)g(x)=g\rho_{\sigma}(f)^{\pm 1}(x)=g(x)$. This shows that $\widehat f_k$ is the identity on $\widehat S_k$ which is a contradiction.
\end{proof}

\begin{lemma}\label{vcd} Let $1\to \Z^n\to G\to Q\to 1$ be an extension of groups where $Q$ is finitely generated with $\vcd Q=k < \infty$. Then $\vcd G= n+ k$.
\end{lemma}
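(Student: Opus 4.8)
The plan is to establish the two inequalities $\vcd G \leq n+k$ and $\vcd G \geq n+k$ separately, in each case reducing from the extension to a torsion-free finite-index subgroup where ordinary cohomological dimension is available. First I would pass to a torsion-free finite-index subgroup $Q_0 \leq Q$ with $\cd Q_0 = \vcd Q = k$ (such a subgroup exists by definition of $\vcd$, since $Q$ is finitely generated hence admits a torsion-free finite-index subgroup realizing its virtual cohomological dimension). Let $G_0$ be the preimage of $Q_0$ in $G$; then $G_0$ has finite index in $G$ and sits in an extension $1 \to \Z^n \to G_0 \to Q_0 \to 1$. Since $\Z^n$ is torsion-free and $Q_0$ is torsion-free, $G_0$ is torsion-free, so $\vcd G = \vcd G_0 = \cd G_0$.

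For the upper bound, I would invoke the standard inequality for group extensions, $\cd G_0 \leq \cd \Z^n + \cd Q_0 = n + k$; this is the cohomological-dimension analogue of the fibration spectral sequence (e.g.\ Bieri, \emph{Homological dimension of discrete groups}), using that $\Z^n$ is of type $\FP$ so there is no subtlety. For the lower bound, the cleanest route is to exhibit a nonzero class in $\Ho^{n+k}(G_0; M)$ for a suitable coefficient module. Choose a $\Q Q_0$-module (or $\Z Q_0$-module) $N$ with $\Ho^k(Q_0; N) \neq 0$, which exists since $\cd Q_0 = k$. Then use the Lyndon–Hochschild–Serre spectral sequence $\Ho^p(Q_0; \Ho^q(\Z^n; \tilde N)) \Rightarrow \Ho^{p+q}(G_0; \tilde N)$ for an appropriate $G_0$-module $\tilde N$ inflated or otherwise built from $N$: since $\Ho^n(\Z^n; -) \cong (-)$ (up to a sign action which one can twist away, or work with orientable $\Z^n$ so the action is trivial after passing to a further finite-index subgroup), the $E_2^{k,n}$ term is $\Ho^k(Q_0; N) \neq 0$, and it sits in the top corner $p \leq k$, $q \leq n$, so no differential can hit it or map out of it — it survives to $E_\infty$. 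Hence $\Ho^{n+k}(G_0; \tilde N) \neq 0$, giving $\cd G_0 \geq n+k$.

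The main obstacle is bookkeeping around the coefficient module and the possibly nontrivial action of $Q_0$ on $\Ho^n(\Z^n;\Z) \cong \Z$: the conjugation action of $G_0$ on the normal subgroup $\Z^n$ induces an action of $Q_0$ on $\Z^n$, hence on its top cohomology by the determinant (up to sign); this is a homomorphism $Q_0 \to \{\pm 1\}$, and after passing to the index-$\leq 2$ subgroup where it is trivial one may assume $\Z^n$ is an orientable $Q_0$-module, so $\Ho^n(\Z^n; \Z) = \Z$ with trivial action and the spectral-sequence argument goes through with $\tilde N$ simply the inflation of $N$. One should also note that replacing $G$ by a further finite-index subgroup does not change $\vcd$, so all these reductions are harmless. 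Everything else — existence of torsion-free finite-index subgroups, the extension inequality, convergence and edge behaviour of LHS — is standard and I would cite it rather than reprove it.
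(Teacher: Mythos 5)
Your proof is correct and, apart from one wording slip, complete; it shares the paper's first reduction but then replaces the paper's key citation with a direct argument. The paper's proof is two lines: assume $Q$ torsion-free and apply Fel'dman's theorem \cite[Theorem 5.5]{bieri}, which applies because the kernel $\Z^n$ has top cohomology $\Ho^n(\Z^n;\Z\Z^n)\cong\Z$ free abelian and gives $\cd G=\cd\Z^n+\cd Q$ at once. You instead pass to the preimage $G_0$ of a torsion-free finite-index subgroup $Q_0\le Q$, note $G_0$ is torsion-free of finite index so $\vcd G=\cd G_0$, get the upper bound from the standard extension inequality, and get the lower bound by exhibiting a surviving class in the top corner $E_2^{k,n}$ of the Lyndon--Hochschild--Serre spectral sequence, first killing the orientation character of the conjugation action on $\Ho^n(\Z^n;\Z)$ by a further index-$\le 2$ passage (harmless, by Serre's theorem on finite-index subgroups). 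In effect you reprove Fel'dman's theorem in the special case of a $\Z^n$ kernel: your route is self-contained at the cost of the spectral-sequence bookkeeping, while the citation buys brevity and covers more general kernels. One caveat: your parenthetical justification that $Q$ being finitely generated implies it admits a torsion-free finite-index subgroup is not a valid implication in general; the correct justification is simply that the hypothesis $\vcd Q=k<\infty$ already presupposes that $Q$ is virtually torsion-free, and any torsion-free finite-index subgroup then has $\cd$ equal to $k$. Indeed, finite generation of $Q$ plays no role in your argument.
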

\begin{proof} We can assume $Q$ is torsion-free. Now we apply a result of Fel'dman, see \cite[Theorem 5.5]{bieri}
\end{proof}

We will quite often make use of the following corollary. 

\begin{corollary}\label{nice} Let $1\to \Z^n\to G\to Q\to 1$ be an extension of groups where $Q$  has a cocompact model for $\ue Q$ of dimension $\vcd Q=k<\infty$. Then $G$ has a cocompact model for $\ue G$ of dimension $\ugd G=\vcd G =n+k$.  
\end{corollary}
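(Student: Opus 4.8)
The plan is to build the model for $\ue G$ as a fibration-like construction over a cocompact model for $\ue Q$, using the central (or at least torsion-free kernel) extension $1\to \Z^n\to G\to Q\to 1$. First I would invoke Lemma \ref{vcd} to get $\vcd G = n+k$ immediately, so the only real content is producing a cocompact model for $\ue G$ of dimension exactly $n+k$; once we have that, the observation recorded just before Section 5 (that a model of dimension $\vcd G$ is automatically of minimal dimension when $G$ is virtually torsion-free) forces $\ugd G = \vcd G = n+k$.

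The key steps, in order: (1) Let $Y$ be the given cocompact model for $\ue Q$ with $\dim Y = k$. Pull back the $Q$-action along the quotient map $G\to Q$ to get a $G$-CW-complex $Y$ on which $\Z^n$ acts trivially; this is a cocompact classifying space for the family of subgroups of $G$ whose image in $Q$ is finite, i.e.\ the family generated by $\Z^n$ and finite subgroups. (2) Take $E = \R^n$ with the $G$-action factoring through $G\to G/(\text{finite-ish})$... more carefully, since $\Z^n\normal G$ and $G/\Z^n = Q$ acts on $\Z^n$ by conjugation, $\R^n$ is a cocompact model for $\underline E(\Z^n\rtimes F)$ for finite $F$, and $G$ acts on $\R^n$ through the natural affine action (the $\Z^n$ acting by translations). (3) Form the diagonal action of $G$ on $\R^n\times Y$. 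This is a cocompact $G$-CW-complex of dimension $n+k$. (4) Check it is a model for $\ue G$: for a finite subgroup $F\leq G$, $F$ maps injectively (or with finite kernel, but here $F$ is finite so its image in $Q$ is a finite subgroup $\bar F$) to $Q$; $Y^{\bar F}$ is contractible by hypothesis, and $(\R^n)^F$ is a nonempty affine subspace (hence contractible) since $F$ acts affinely on $\R^n$ and any finite group acting affinely on $\R^n$ has a fixed point (average an orbit); conversely if $F$ is infinite then either its image in $Q$ is infinite, so $Y^{\bar F}=\emptyset$, or its image is finite and its intersection with $\Z^n$ is infinite, but then $F$ contains $\Z$ and has no fixed point in $\R^n$ under translations. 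So the fixed-point sets are contractible exactly for finite $F$ and empty otherwise. (5) Cocompactness of $\R^n\times Y$ follows because $\Z^n$ acts cocompactly on $\R^n$ and $Q$ acts cocompactly on $Y$, so $G$ acts cocompactly on the product.

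The main obstacle — really the only subtle point — is verifying the fixed-point condition for \emph{infinite} subgroups $F\leq G$ with finite image in $Q$: one must rule out such an $F$ having a fixed point in $\R^n\times Y$. The argument is that if $\bar F$ is finite then $F\cap\Z^n$ has finite index in $F$, hence is infinite (as $F$ is infinite), hence contains a nontrivial translation, which acts freely on $\R^n$; so $(\R^n)^F=\emptyset$ and a fortiori $(\R^n\times Y)^F=\emptyset$. The other direction (finite $F$ has contractible fixed set) is the standard averaging/fixed-point argument for affine actions combined with contractibility of $Y^{\bar F}$. Everything else is bookkeeping: dimension count, cocompactness, and the final invocation that $\vcd G$-dimensional models for $\ue G$ are minimal.

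Finally I would remark that this Corollary will be applied repeatedly in what follows to the extensions \eqref{restrict_cent} and \eqref{norm} from Proposition \ref{sequence}, where $Q$ is a product of mapping class groups (and finite extensions thereof, or virtually cyclic groups), for which cocompact models of dimension $\vcd$ are available via Proposition \ref{non-conn} — so the corollary packages exactly the inductive step needed.
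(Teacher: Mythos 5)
Your reduction of the problem (Lemma \ref{vcd} gives $\vcd G=n+k$, and the remark at the end of Section 4 upgrades any $\vcd G$-dimensional cocompact model to one realising $\ugd G$) matches the paper, but the construction at the heart of your argument has a genuine gap: step (2) asserts that $G$ acts on $\R^n$ affinely with $\Z^n$ acting by translations and with linear part the conjugation action of $Q$. No such action exists in general. Such a homomorphism $G\to\aff(\R^n)=\R^n\rtimes\GL_n(\R)$ restricting to the standard translations on $\Z^n$ exists precisely when the image of the extension class under $H^2(Q;\Z^n)\to H^2(Q;\R^n)$ vanishes, and this can fail even for central extensions. A concrete counterexample satisfying all hypotheses of the corollary is the integral Heisenberg group $1\to\Z\to G\to\Z^2\to 1$: the conjugation action is trivial, so the linear part would be trivial and the action would be a homomorphism $G\to(\R,+)$; but the centre of $G$ equals its commutator subgroup, so it is killed by any such homomorphism, contradicting that it should act by a nontrivial translation. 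This is not a corner case for the paper either: the central extensions \eqref{restrict_cent} and \eqref{boundary_seq} to which the corollary is applied arise from capping/cutting and are typically non-split with non-torsion class, so your diagonal action on $\R^n\times Y$ is simply not available there. (Your fixed-point analysis in step (4) is fine \emph{given} such an action, and works verbatim in the split case, but the corollary must cover arbitrary extensions.)

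The repair is exactly the paper's route, which avoids any global action on $\R^n$: let $\GG$ be the family of preimages in $G$ of the finite subgroups of $Q$. Pulling back the cocompact $k$-dimensional model $Y$ for $\ue Q$ gives a cocompact model for $E_{\GG}G$ of dimension $k$; each $H\in\GG$ is finitely generated virtually free abelian of rank at most $n$, hence admits a cocompact model for $\ue H$ of dimension at most $n$ (this is the only place Euclidean space enters, and only for these subgroups, where the averaging argument you invoke is legitimate). The transitivity principle, Proposition \ref{trans}, then produces a cocompact model for $\ue G$ of dimension $n+k$ by a fibred, skeleton-wise construction rather than a product with diagonal action; Lemma \ref{vcd} and the minimality observation finish the proof as in your endgame. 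In short: keep your steps (1), (4)--(5) in spirit, but replace the product $\R^n\times Y$ by an application of Proposition \ref{trans} to the pair $\FF\subseteq\GG$.
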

\begin{proof} Let $\FF$ be the family of finite subgroup of $G$ and  $\GG$ be the family of the preimages of all the finite subgroups of $Q$ under the projection of $G$ onto $Q$. Applying Proposition \ref{trans} together with the general fact that finitely generated virtually free abelian groups of rank $m$ have the Euclidean space of dimension $m$ as a cocompact model for proper actions, we obtain that $G$ has a cocompact model for $\ue G$ of dimension $n+k$. Lemma \ref{vcd} finishes the claim.
\end{proof}

The next proposition concerns surfaces with boundary. Suppose $S$ has $b\ne 0$  boundary  components $\beta_1, \dots, \beta_b$. Note that the capping homomorphism $\theta_S:\M(S)\to \M(\widehat{S}, \Omega)\subseteq \M(\widehat{S})$ where $\Omega$ is the set of punctures of $\widehat{S}$ that come from capping the boundary components.

\begin{proposition}\label{boundary} Let $S$ be a compact orientable surface with nonempty boundary, finitely many punctures and $\chi(S)< 0$. Suppose $f\in \M(S)$ generates an infinite cyclic subgroup $H$. Then there exists $l\in \N$ such that $f^l\in \M(S)[m]$, $m\geq 3$, with  $N_{\M(S)}[H]=N_{\M(S)}(f^l)$ and 
\begin{equation}\label{boundary_seq}1\to \Z^b\to N_{\M(S)}(f^l)\xrightarrow{\theta_S} N_{\M(\widehat{S}, \Omega)}(\theta_S(f^l))\to 1,\end{equation}
where $\Z^b=\langle T_{\beta_1}, \dots, T_{\beta_b}\rangle$ and $\theta_S(f^l)$ is either trivial or of infinite order.
\end{proposition}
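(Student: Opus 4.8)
The plan is to reduce statement~\eqref{boundary_seq} to the cutting/capping machinery recalled above together with the closed-surface results (Propositions~\ref{pin} and~\ref{sequence}). First I would handle the congruence subgroup issue: since $\M(S)$ is virtually torsion-free, there is a finite-index normal subgroup $\M(S)[m]$ ($m\geq 3$) that is torsion-free, so some power $f^{l}$ lies in $\M(S)[m]$; replacing $f$ by $f^{l}$ changes neither $H$ up to commensurability nor its commensurator, so $N_{\M(S)}[H]=N_{\M(S)}[\langle f^{l}\rangle]$. The claim $N_{\M(S)}[H]=N_{\M(S)}(f^{l})$ should follow from the capping homomorphism $\theta_S$ and an appeal to the closed-surface case: $\theta_S$ has central kernel $\Z^{b}=\langle T_{\beta_1},\dots,T_{\beta_b}\rangle$ (a subgroup of the free abelian group generated by boundary Dehn twists, central because these twists are fixed by everything commuting with $f^{l}$), and $\theta_S(f^{l})$ generates an infinite cyclic or trivial subgroup of $\M(\widehat S,\Omega)\subseteq \M(\widehat S)$, which is a finite-index subgroup of the closed (with punctures) surface mapping class group to which Proposition~\ref{pin} applies. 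Passing to a further power if necessary, I can arrange $\theta_S(f^{l})\in\M(\widehat S,\Omega)[m]$ as well, so Proposition~\ref{pin} gives $N_{\M(\widehat S,\Omega)}[\theta_S(f^{l})]=N_{\M(\widehat S,\Omega)}(\theta_S(f^{l}))$.

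Next I would build the exact sequence~\eqref{boundary_seq} by restricting $\theta_S$. Since $\ker\theta_S=\Z^{b}$ is central and generated by the boundary twists, it is contained in $N_{\M(S)}(f^{l})$, so restricting $\theta_S$ gives an exact sequence
\begin{equation*}
1\to \Z^{b}\to N_{\M(S)}(f^{l})\xrightarrow{\theta_S} \theta_S\bigl(N_{\M(S)}(f^{l})\bigr)\to 1,
\end{equation*}
and it remains to identify the image $\theta_S(N_{\M(S)}(f^{l}))$ with $N_{\M(\widehat S,\Omega)}(\theta_S(f^{l}))$. The inclusion $\subseteq$ is immediate: if $g$ commutes with $f^{l}$ then $\theta_S(g)$ commutes with $\theta_S(f^{l})$. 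For the reverse inclusion I would argue that if $\bar g\in\M(\widehat S,\Omega)$ normalises (equivalently centralises, up to passing to the appropriate index-two subgroup as is standard for infinite cyclic subgroups) $\theta_S(f^{l})$, then any lift $g\in\M(S)$ satisfies $[g,f^{l}]\in\ker\theta_S=\Z^{b}$; one then corrects $g$ by an element of $\Z^{b}$ to kill this commutator, using that $\Z^{b}$ is central so the commutator defines a crossed homomorphism $\langle f^{l}\rangle\to\Z^{b}$, which is a coboundary because $H^{1}(\Z;\Z^{b})$-type obstructions vanish after possibly replacing $f^{l}$ by a further power (equivalently, $[g,f^{l}]=T^{k}$ for some $T\in\Z^{b}$ forces, upon iterating, divisibility that lets one solve $[gT^{-s},f^{l}]=1$). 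This is essentially the argument in~\cite[Prop.~4.12]{JT} transported through $\theta_S$ instead of $\rho_\sigma$.

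Finally, the assertion that $\theta_S(f^{l})$ is either trivial or of infinite order: $\theta_S$ has torsion-free kernel $\Z^{b}$, so the preimage of a torsion element is torsion, but $f^{l}\in\M(S)[m]$ is torsion-free and of infinite order; hence $\theta_S(f^{l})$ has infinite order unless it is trivial (which happens precisely when $f^{l}$ is a product of boundary Dehn twists).

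I expect the main obstacle to be the surjectivity onto $N_{\M(\widehat S,\Omega)}(\theta_S(f^{l}))$, i.e.\ lifting a normalising element through $\theta_S$ while controlling the commutator in the central kernel $\Z^{b}$; this is where one must be careful about whether normalisers versus centralisers are used and about passing to finite-index subgroups / further powers of $f$, exactly as in the closed case treated in Proposition~\ref{sequence} and~\cite{JT}. Everything else is a matter of assembling the cutting/capping exact sequences and invoking Proposition~\ref{pin}.
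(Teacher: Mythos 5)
Your overall strategy follows the paper's: pass to a power $f^l\in\M(S)[m]$, use the capping homomorphism $\theta_S$ with central kernel $\Z^b=\langle T_{\beta_1},\dots,T_{\beta_b}\rangle$, and reduce the identification of commensurator and normaliser to the boundaryless case via Proposition \ref{pin}. But the step you yourself flag as the main obstacle --- surjectivity of $\theta_S\colon N_{\M(S)}(f^l)\to N_{\M(\widehat S,\Omega)}(\theta_S(f^l))$ --- is where your argument genuinely fails. You propose to lift $\bar g$, observe $[g,f^l]\in\Z^b$, and then ``correct'' $g$ by an element $T^{-s}\in\Z^b$ to kill the commutator. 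Since $\Z^b$ is central in $\M(S)$, multiplying $g$ by any element of $\Z^b$ leaves the commutator unchanged: $[gT^{-s},f^l]=[g,f^l]$. Moreover the relevant obstruction does not vanish: with trivial action, $H^1(\langle\bar f\rangle;\Z^b)\cong\Z^b\neq 0$, and passing to further powers of $f^l$ only rescales the cocycle, it does not make it a coboundary. The integral Heisenberg group $1\to\Z\to H\to\Z^2\to 1$ with $f$ a generator of one $\Z$-factor downstairs shows that in a general central extension the image of the normaliser (or commensurator) of $\langle f\rangle$ can be a proper subgroup of the normaliser of the image; so this surjectivity is not a formal consequence of centrality and must use mapping-class-group input.

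The paper does not attempt a formal proof of this step either: it restricts $\theta_S$ to the commensurator $N_{\M(S)}[H]$, shows its image $Q$ lands in $N_{\M(\widehat S,\Omega)}(\theta_S(f))$ using Proposition \ref{pin} applied to $\widehat S$, and then quotes \cite[eq.~(25)]{JT} for the reverse containment $N_{\M(\widehat S,\Omega)}(\theta_S(f))\subseteq Q$, i.e.\ precisely the lifting statement you tried to prove; that result rests on canonical-form/canonical-reduction-system arguments specific to $\M(S)$, not on a cohomological correction of lifts. Once that is granted, the equality $N_{\M(S)}[H]=N_{\M(S)}(f^l)$ and the sequence \eqref{boundary_seq} follow exactly as you and the paper both indicate (any $g\in N_{\M(S)}[H]$ differs from an element of $N_{\M(S)}(f^l)$ by a central boundary twist), and your handling of the power $f^l$, of the trivial-image case, and of ``$\theta_S(f^l)$ trivial or of infinite order'' is fine. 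To repair your write-up, replace the central-correction argument by an appeal to \cite{JT} (or reproduce their argument via the canonical form \eqref{form} and the cutting homomorphism), rather than trying to kill $[g,f^l]$ inside the centre.
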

\begin{proof} By replacing $f$ with a sufficiently large power, we can assume that $l=1$ and $f\in \M(S)[m]$, $m\geq 3$ with $\theta_S(f)$ either trivial or of infinite order. Restricting the capping homomorphism to $N_{\M(S)}[H]$ we have
$$1\to \Z^b\to N_{\M(S)}[H]\xrightarrow{\theta_S} Q\to 1,$$ where $Q\subseteq \M(\widehat{S}, \Omega)$.  Note that if  $\theta_S(f)$ is trivial, then $N_{\M(S)}[H]= N_{\M(S)}(H)=\M(S)$ and $Q=\M(\widehat{S}, \Omega)$ as desired. Otherwise, $\langle\theta_S(f)\rangle$ is infinite cyclic and $Q \subseteq N_{\M(\widehat{S}, \Omega)}[\langle \theta_S(f)\rangle ]= N_{\M(\widehat{S}, \Omega)}(\theta_S(f))$ by Proposition \ref{pin} applied to $\widehat{S}$ (replacing $f$ with a sufficiently large power if necessary). Since $N_{\M(\widehat{S}, \Omega)}(\theta_S(f))\subseteq Q$ (see for example \cite[eq.~(25)]{JT}), we deduce $Q= N_{\M(\widehat{S}, \Omega)}(\theta_S(f))$. Hence, $N_{\M(S)}[H]=N_{\M(S)}(f)$.
\end{proof}

\begin{proposition}\label{cmm} Let  $G=\M(S)$ where $S$ is a compact orientable surface $S$ with $\chi(S)< 0$ possibly with finitely many punctures and boundary components. Denote by $\FF$ the family of finite subgroups of $G$. Let $H$ be an infinite cyclic subgroup of $G$. Then the commensurator $N_G[H]$  has  cocompact models for $E_{\FF\cap N_G[H]}N_G[H]$ and $E_{\FF[H]}N_G[H]$ of dimensions $\vcd N_G[H]$ and $\vcd N_G[H]-1$, respectively. 
\end{proposition}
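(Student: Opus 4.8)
The plan is to handle the closed case ($\partial S=\emptyset$) first and then deduce the general case from it by capping boundary components. In either case one begins by using Proposition~\ref{pin} (closed case) or Proposition~\ref{boundary} (bounded case) to replace $H$ by a power: after this, $N:=N_G[H]$ is the ordinary normaliser $N_G(f^l)$ of an infinite cyclic \emph{normal} subgroup $A=\langle f^l\rangle$ commensurable with $H$, with $f^l\in\M(S)[m]$, $m\ge 3$. The first required model is $E_{\FF\cap N}N=\ue N$, so one must produce a cocompact $\ue N$ of dimension $\vcd N$ — which is automatically of minimal dimension, since $N\le\M(S)$ is virtually torsion-free. For the second, I would check — exactly as in the polycyclic example of Section~3 — that pulling back a model for $\ue(N/A)$ along $N\to N/A$ gives a model for $E_{\FF[H]}N$: a finite subgroup of $N$ meets $A$ trivially and so has finite image in $N/A$; a virtually cyclic subgroup commensurable with $A$ has finite image too; every other subgroup of $N$ has infinite image; so the fixed-point conditions are as required. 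Thus it suffices to build a cocompact $\ue(N/A)$ of dimension $\vcd(N/A)$, and $\vcd(N/A)=\vcd N-1$ by Lemma~\ref{vcd} because $A\cong\Z$ is normal in $N$.

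Everything therefore reduces to constructing minimal-dimensional cocompact models for the proper actions of $N$ and of $N/A$. In the closed case Proposition~\ref{sequence} provides a central-type extension $1\to\Z^n\to N\to Q\to 1$, where $\Z^n$ is generated by the Dehn twists about the canonical reduction curves and $Q$ is a finite extension of $\prod_{i=1}^{a}\M(\widehat{S}_i,\Omega_i)\times\prod_{j=a+1}^{k}V_j$ (the trivial components and the virtually cyclic centralisers of the pseudo-Anosov components), contained in $\M(\sqcup_{i\le a}\widehat{S}_i)\times N_{\M(\sqcup_{j>a}\widehat{S}_j)}(\langle\rho_\sigma(f^l)\rangle)$. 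To model $\ue N$: I would first build a cocompact $\ue Q$ of dimension $\vcd Q$ by observing that $Q$ is a finite-index subgroup of a product of wreath products $\prod_c(\M(\widehat{S}_c)\wr\Sigma_{m_c})$ of mapping class groups of the trivial components with a finitely generated virtually abelian group; the first factor admits a cocompact model for proper actions of the correct (minimal) dimension by Lemma~\ref{fin_wr} applied to the minimal models of Aramayona--Mart\'inez-P\'erez (Proposition~\ref{non-conn}), the second admits a Euclidean space, and one restricts the product model to the finite-index subgroup $Q$. Corollary~\ref{nice}, with the free abelian kernel $\Z^n$, then upgrades this to a cocompact $\ue N$ of dimension $n+\vcd Q=\vcd N$. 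To model $\ue(N/A)$: since $f^l\in\M(S)[m]$ fixes each reduction curve with orientation, $A$ centralises $\Z^n$; $\rho_\sigma(A)$ is a normal infinite cyclic subgroup of $Q$ lying in the virtually cyclic block, and $N/A$ fits into $1\to\Z^n/(\Z^n\cap A)\to N/A\to Q/\rho_\sigma(A)\to 1$ with finitely generated virtually abelian kernel (equal to $\Z^n$ unless $f$ is a multitwist, in which case it is virtually $\Z^{n-1}$). One builds a cocompact $\ue(Q/\rho_\sigma(A))$ of dimension $\vcd(Q/\rho_\sigma(A))$ exactly as for $Q$, dividing the relevant ambient product of wreath products by $\rho_\sigma(A)$ before restricting, and then applies the transitivity principle (Proposition~\ref{trans}) rather than Corollary~\ref{nice} — since the kernel may now have torsion — together with Euclidean models for the preimages of finite subgroups, to obtain a cocompact $\ue(N/A)$ of dimension $\vcd(N/A)=\vcd N-1$.

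For surfaces with boundary, Proposition~\ref{boundary} gives $1\to\Z^b\to N\to N'\to 1$, where $N'$ is either $\M(\widehat{S},\Omega)$ (if $\theta_S(f^l)=1$) or the commensurator $N_{\M(\widehat{S},\Omega)}[\langle\theta_S(f^l)\rangle]$, and in either case a finite-index subgroup of the corresponding group for the closed punctured surface $\widehat{S}$. Running the already-settled closed case for $\widehat{S}$ and restricting to $N'$ yields cocompact minimal models for $\ue N'$ and for $\ue(N'/\theta_S(A))$. When $\theta_S(f^l)=1$ one has $N=\M(S)$ and $A\le\Z^b$ is central, a case handled directly as above (using Proposition~\ref{non-conn} for $\ue\M(S)$ and Proposition~\ref{trans} for $\ue(\M(S)/A)$); when $\theta_S(f^l)$ has infinite order, $A\cap\Z^b$ is trivial, so Corollary~\ref{nice} along the $\Z^b$-extension produces a cocompact $\ue N$ of dimension $b+\vcd N'=\vcd N$ and a cocompact $\ue(N/A)$ of dimension $\vcd N-1$, as needed.

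The main obstacle is this middle step — producing cocompact models for $\ue N$ and $\ue(N/A)$ of the \emph{minimal} dimension $\vcd$. Tracking dimensions through the iterated extensions and quotients is routine given Lemma~\ref{vcd}; the real difficulty is that $Q$ and $Q/\rho_\sigma(A)$ are only \emph{finite extensions} (not finite-index subgroups) of products of finite-index subgroups of mapping class groups with virtually cyclic groups, and the statement "a finite extension of a group with a minimal cocompact $\ue$ again has one" fails in general. This forces one to route through products of wreath products — of which these groups \emph{are} finite-index subgroups — using Lemma~\ref{fin_wr} and the Aramayona--Mart\'inez-P\'erez theorem to keep the dimension minimal, and to fall back on the bare transitivity principle in place of Corollary~\ref{nice} once the abelian kernel acquires torsion upon dividing by $A$.
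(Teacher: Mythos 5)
Your proposal is correct and follows essentially the same route as the paper: reduce to honest normalisers via Propositions \ref{pin} and \ref{boundary}, use the extension of Proposition \ref{sequence} to present $N_G[H]$ (and its quotient by the cyclic subgroup, which models $E_{\FF[H]}$ by pullback) as $\Z^n$- resp.\ $\Z^b$-extensions of groups commensurable with products $\M(\sqcup\widehat S_i)\times(\text{virtually abelian})$, and then assemble minimal cocompact proper classifying spaces from Proposition \ref{non-conn} (wreath products, Lemma \ref{fin_wr}), Euclidean factors, and Corollary \ref{nice}. The only deviations are cosmetic: you absorb the pseudo-Anosov case (handled in the paper via McCarthy) as the degenerate reduction system, and where the quotient kernel acquires torsion you invoke Proposition \ref{trans} instead of the paper's device of replacing $H$ by a commensurable subgroup so that the kernel stays free abelian and Corollary \ref{nice} applies.
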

\begin{proof} By Propositions \ref{pin} and \ref{boundary}, we can assume $N_G[H] = N_G(H)$ where $H=\langle f\rangle$ and $f\in \M(S)[m]$, $m\geq 3$.

{\it Suppose first that the boundary of $S$ is empty.} By the Nielsen-Thurston Classification Theorem, $f$ is either pseudo-Anosov or reducible. 

If $f$ is pseudo-Anosov, then by \cite[Theorem 1]{mcCarthy}, $N_G(f)$ is virtually cyclic. So, a Euclidean line and a point are (cocompact) models for $E_{\FF\cap N_G(f)}N_G(f)$ and $E_{\FF[H]}N_G(f)$, respectively. 

If $f$ is reducible, say with the canonical reduction system $\sigma$, then $N_G(f)$ satisfies (\ref{norm}) of Proposition \ref{sequence}:
\begin{equation*}
1\to \Z^n\to N_G(f){\xrightarrow{\rho_{\sigma}}}Q \to 1,
\end{equation*}
such that $Q$ is a finite index subgroup of 
 $$P:=\M(\sqcup_{i=1}^a \widehat S_i)\times A$$ 
 where $A\subseteq N_{\M(\sqcup_{j=a+1}^k \widehat S_k)}((\hat f_{a+1},  \dots , \hat f_{k}))$ is virtually free abelian.  Let $ L:=\M(\sqcup_{i=1}^a \widehat S_i)$. To show that  $N_G(f)$ has  a cocompact model for $E_{\FF\cap N_G(f)}N_G(f)$ of dimension $\vcd N_G(f)$, by Corollary \ref{nice},  it suffices to show that $Q$ has a cocompact model for $\ue Q$ of dimension $\vcd Q$. 

Since $Q$ is a finite index subgroup in $P$, it is enough to show that $P$ has  cocompact models for  $\ue P$  of dimension
$\vcd P=\vcd Q$. Since $A$ is virtually free abelian, applying Lemma \ref{vcd}, we obtain that $\vcd P=\vcd L + \vcd A$. By Proposition \ref{non-conn}, there is a cocompact model $X_L$ for $\ue L$ of dimension $\vcd L$. Since $A$ is finitely generated virtually free abelian, there is a cocompact model $X_A$ for $\ue A$ of dimension $\vcd A$. Then $X_L\times X_A$ is a cocompact model for the classifying space $\ue P$ of dimension $\vcd P$ as desired.

To establish the second claim, observe that  under the natural projection of $N_G(H)$ onto $W_G(H)=N_G(H)/H$, a classifying space $\ue W_G(H)$ becomes a model for a classifying space of $N_G(H)$ for $E_{\FF[H]}N_G(H)$. Hence its suffices to show that there is a cocompact model for $\ue W_G(H)$ of dimension at most $\vcd N_G(H)-1$.

By Proposition 5.5, we only need to consider two case: $\rho_{\sigma}(f)$ is the identity or it has infinite order. First, suppose $\rho_{\sigma}(f)$ is the identity, that is $H\leq \Z^n$. By Corollary 4.9 of \cite{JT}, we can assume that $\Z^n/H\cong \Z^{n-1}$. Then
\begin{equation}\label{weyl}
1\to \Z^{n-1}\to W_G(f){\xrightarrow{\overline{\rho}_{\sigma}}}Q \to 1,
\end{equation}

Again, using Corollary \ref{nice}, there is a cocompact model of dimension $n-1+\vcd Q=\vcd N_G(f)-1$.

Now, suppose $\rho_{\sigma}(f)$ has infinite order.  Then
\begin{equation*}
1\to \Z^{n}\to W_G(f){\xrightarrow{\overline{\rho}_{\sigma}}}Q/{\Z}\to 1,
\end{equation*}
where $\overline{\rho}_{\sigma}(f)$ generates $\Z\leq Q$. By Corollary \ref{nice}, it suffices to show then that $Q/{\Z}$ has a cocompact model for $\ue (Q/{\Z})$ of dimension at most $\vcd Q$. Note that, from (\ref{norm}), it follows that $\Z$ is normal in $P$. Therefore, it remains to show that $P/{\Z}$ has a  cocompact model for  $\ue (P/{\Z})$  of dimension $\vcd P$. But $P/{\Z}\cong L\times (A/{\Z})$ and  a similar argument as above gives a cocompact model $X_L\times X_{A/{\Z}}$ for $\ue (P/{\Z})$ of dimension $\vcd P-1$.

{\it Suppose $S$ has nonempty boundary.}  The proof easily reduces to the case of empty boundary. By (\ref{boundary_seq}) of Proposition \ref{boundary}, we have the central extension
\begin{equation}\label{here}1\to \Z^b\to N_{\M(S)}(H)\xrightarrow{\theta_S} N_{\M(\widehat{S}, \Omega)}(\theta_S(f))\to 1.\end{equation}
When $\theta_S(f)$ has infinite order, then the quotient of (\ref{here})  by $H$, gives 
\begin{equation}\label{there}1\to \Z^b\to W_{\M(S)}(H)\rightarrow W_{\M(\widehat{S}, \Omega)}(\theta_S(f))\to 1.\end{equation}
In case $\theta_S(f)$ is trivial, note that $N_{\M(S)}(H)=\M(S)$. Replacing $H=\langle f \rangle$ with a commensurable subgroup if necessary, we can assume that $f\in \Z^b$ such that $\Z^b/H\cong \Z^{b-1}$. Thus, we obtain
\begin{equation}\label{really}1\to \Z^b\to N_{\M(S)}(H)\xrightarrow{\theta_S} \M(\widehat{S}, \Omega)\to 1,\end{equation}
\begin{equation}\label{almost}1\to \Z^{b-1}\to  W_{\M(S)}(H)\to \M(\widehat{S}, \Omega)\to 1.\end{equation} 

From the empty boundary case of the proposition applied to $\widehat S$ and  Corollary \ref{nice} applied to (\ref{here}), (\ref{there}), (\ref{really}), and (\ref{almost}), we obtain the desired result.
\end{proof}

We need the following complete computation of the virtual cohomological dimension of $\M(S)$ for surfaces with negative Euler characteristic  by Harer.

\begin{theorem}[{\cite[Thm.~4.1]{harer}}]\label{Harer}  Let $S=S^r_{g, b}$ be an oriented surface of genus $g$, $b$ boundary components and $r$ punctures and recall $\chi(S)=2-2g - b - r$. If $\chi(S)<0$, then
 $$\vcd \M(S)=\left\{ \begin{array}{ll}
4g+2b+r-4 & g>0, r+b>0\\
4g -5 & r +b =0\\
2b+r - 3 & g=0.\\
\end{array}\right.$$       
\end{theorem}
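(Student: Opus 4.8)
The plan is to compute $\vcd\M(S)$ by sandwiching the ordinary cohomological dimension of a torsion-free finite-index subgroup $\Gamma=\M(S)[m]$ ($m\ge 3$) between matching bounds; since $\M(S)$ acts properly and cocompactly on the finite-dimensional Teichm\"uller space $\mathcal T(S)$ it has finite vcd, so $\vcd\M(S)=\cd\Gamma$ for every such $\Gamma$. The first reduction disposes of boundary components: if $S=S^r_{g,b}$ with $b\ge 1$, the capping homomorphism $\theta_S\colon\M(S)\to\M(\widehat S,\Omega)$ has kernel $\Z^b=\langle T_{\beta_1},\dots,T_{\beta_b}\rangle$, and $\M(\widehat S,\Omega)$ is finite-index in $\M(\widehat S)$ where $\widehat S=S^{r+b}_{g,0}$ has $\chi(\widehat S)=\chi(S)$; by Lemma~\ref{vcd}, $\vcd\M(S)=b+\vcd\M(\widehat S)$, and one checks $b+(4g+(r+b)-4)=4g+2b+r-4$ and $b+((r+b)-3)=2b+r-3$. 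So it suffices to treat surfaces with empty boundary.

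\textbf{The punctured case.} Let $S=S^r_{g,0}$ with $r\ge 1$. For the upper bound, construct, following Harer, a cocompact $\M(S)$-equivariant spine of $\mathcal T(S)$: using Penner's decorated Teichm\"uller theory (equivalently the Bowditch--Epstein/Harer ideal-cell decomposition), the arc complex $\mathcal A(S)$ -- with simplices the isotopy classes of systems of disjoint, pairwise non-isotopic essential arcs joining punctures -- contains a $\M(S)$-invariant subcomplex $\mathcal A_\infty(S)$ of non-filling systems such that $\mathcal A(S)\smallsetminus\mathcal A_\infty(S)$ is $\M(S)$-equivariantly homeomorphic to decorated Teichm\"uller space $\mathcal T(S)\times\R^r_{>0}$, hence equivariantly contractible. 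Dualizing the cell structure produces a cocompact equivariant deformation retract $Y\subseteq\mathcal T(S)$, and Harer's combinatorial count (number of arcs in an ideal triangulation minus the minimal number in a filling system) gives $\dim Y=4g+r-4$ if $g\ge 1$ and $\dim Y=r-3$ if $g=0$. Since $\Gamma$ acts freely and cocompactly on the contractible $Y$, $\cd\Gamma\le\dim Y$. For the matching lower bound the cleanest route is to show $\M(S)$ is a virtual duality group: a bordification $\overline{\mathcal T}(S)$ (a contractible manifold-with-corners with a proper cocompact $\M(S)$-action) has boundary at infinity homotopy equivalent to a wedge of spheres of dimension $\dim_\R\mathcal T(S)-\cd\Gamma-1$ -- this boundary is built from, and inherits the connectivity of, the arc complex -- so Bieri--Eckmann duality \cite{BE} forces $\cd\Gamma=\dim Y$. (A more concrete lower bound can instead be extracted by exhibiting an explicit subgroup of the right cohomological dimension, built as an iterated extension of free and free-abelian groups from point-pushing (Birman) subgroups and Dehn-twist subgroups supported on a subsurface decomposition via the inclusion homomorphisms $\eta_i$.)

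\textbf{The closed case.} Let $S=S_g$ with $g\ge 2$ (so $\chi(S)<0$). There is no arc complex here, so argue as follows. From the Birman exact sequence $1\to\pi_1(S_g)\to\M(S_{g,1})\to\M(S_g)\to 1$ and the inequality $\cd E\le\cd N+\cd Q$ for an extension $1\to N\to E\to Q\to 1$, together with the value $\vcd\M(S_{g,1})=4g-3$ from the punctured case and $\cd\pi_1(S_g)=2$, we get $\vcd\M(S_g)\ge 4g-5$. For the reverse inequality use Harer's bordification $\overline{\mathcal T}(S_g)$: it is a contractible manifold-with-corners on which $\M(S_g)$ acts properly and cocompactly, and its boundary at infinity is $(2g-3)$-connected and homotopy equivalent to a wedge of $(2g-2)$-spheres (mirroring Harer's computation of the homotopy type of the curve complex $\mathcal C(S_g)$); hence by Bieri--Eckmann duality $\M(S_g)$ is a virtual duality group of dimension $\dim_\R\mathcal T(S_g)-(2g-2)-1=(6g-6)-(2g-1)=4g-5$, so $\cd\Gamma=4g-5$. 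Collecting the three cases gives the stated formula.

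\textbf{Main obstacle.} Essentially all the depth is in the upper bounds: producing the cocompact equivariant spine in the punctured case (which rests on the contractibility of $\mathcal A(S)\smallsetminus\mathcal A_\infty(S)$ and Penner's cell decomposition of decorated Teichm\"uller space) and the bordification together with the homotopy type of its boundary in the closed case (which rests on the sharp connectivity of the curve complex). Once these geometric models are available, the reduction to a torsion-free $\Gamma$, the duality bookkeeping, and the Euler-characteristic count that separates the three cases are all routine.
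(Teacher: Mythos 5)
There is nothing in the paper to compare your argument against: Theorem \ref{Harer} is quoted from Harer \cite{harer} and the paper supplies no proof of its own. Judged on its own terms, your outline is essentially a reconstruction of Harer's original proof, and the parts you actually carry out are correct: the reduction to a torsion-free congruence subgroup, the elimination of boundary components via the capping central extension $1\to\Z^b\to\M(S)\to\M(\widehat S,\Omega)\to1$ together with Fel'dman's lemma (Lemma \ref{vcd} of the paper), the arithmetic separating the three cases, and the closed-case lower bound $\vcd\M(S_g)\ge 4g-5$ obtained from the Birman exact sequence and $\cd E\le\cd N+\cd Q$ all check out. What you have not proved is precisely what you flag as the main obstacle, and it is the entire substance of Harer's theorem: the existence of the $(4g-4+r)$-dimensional (resp.\ $(r-3)$-dimensional) cocompact equivariant spine of $\mathcal T(S)$ in the punctured case, and in the closed case the cocompact bordification whose boundary has the homotopy type of a wedge of $(2g-2)$-spheres, fed into Bieri--Eckmann duality \cite{BE}. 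Importing these as black boxes is reasonable for a cited theorem, but it makes your text a proof outline rather than a proof. One small correction: Penner's cell decomposition identifies $|\mathcal A(S)|\smallsetminus|\mathcal A_\infty(S)|$ equivariantly with the \emph{projectivized} decorated Teichm\"uller space (of dimension $6g-7+3r$), not with $\mathcal T(S)\times\R^r_{>0}$ itself; the spine-dimension count you state (total number of arcs in an ideal triangulation minus the minimal number in a filling system) is nonetheless the right one.
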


We are now ready to prove our main theorem.

\begin{proof}[Proof of Theorem \ref{main}] Set $G=\M(S)$ and let $\FF$ and $\Vc$ be the families of finite and virtually cyclic subgroups of $G$, respectively, equipped with the commensurabilty relation on $\Vc \smallsetminus \FF$. By \cite[Cor.~1.3]{AM},  there is a cocompact model for $\underline{E}G$ of minimal dimension $\ugd G =\vcd G$.
By Proposition \ref{cmm}, for each infinite cyclic subgroup $H \leq G$,  there are cocompact models for both $E_{\FF\cap N_G[H]}N_G[H]$ and $E_{\FF[H]}N_G[H]$ of dimension $\vcd G$.

Applying Theorem \ref{comm-thm}, we obtain a hierarchically cocompact model for $\uueg$ of dimension $\vcd G+ 1$.

To prove the second part of the theorem, assume we have a closed surface $S$. If $g=1$, then $\M(S)\cong\mbox{SL}(2, \Z)$. So, by  \cite[Lemma 5.2]{DP}, $\uucd(\M(S))\geq 2$. Combining this with \cite[Proposition 9]{jpl} shows that  $\uugd \M(S)=\uucd \M(S)=2.$

Now, suppose that $S$ is closed and $g\geq 2$. Then $\vcd \M(S)=4g-5$. Let $\sigma=\{[\alpha], [\beta]\}$ where $\alpha$ and $\beta$ are essential curves that separate $S$ into a pair of pants and a surface of genus $g-1$ (see Figure \ref{fig}). We will show that $\uucd \M(S)_\sigma^0=4g-4$. This will imply that $\uucd \M(S)\geq 4g-4$ and by the first part of the theorem, we will obtain that $\uugd \M(S)=\uucd \M(S)=4g-4$.


Using the cutting homomorphism, there is a short exact sequence 
$$ 1\to \Z^2\to \M(S)_\sigma^0{\xrightarrow{\rho_{\sigma}}} \M(S^3_{0, 0})\times \M(S^1_{g-1, 0})\to 1, $$ where $\ker (\rho_{\sigma})=\langle T_{\alpha}, T_{\beta}\rangle$.  By Lemma \ref{vcd}, 
\begin{align*}
\vcd \M(S_\sigma)^0&=2+\vcd \M(S^0_{3, 0})+ \vcd  \M(S^1_{g-1, 0})\\
&=2+0+4(g-1)+1-4\\
&=4g-5.
\end{align*}
By Proposition \ref{non-conn}, it follows that there is a cocompact model of for ${\underline E} \M(S_\sigma)^0$ of dimension $\vcd \M(S_\sigma)^0=4g-5$ 
which is therefore the same as $\ugd \M(S_\sigma)^0$.

Set $C=\M(S_\sigma)^0\cap \M(S)[3],$ where $\M(S)[3]$ is the level $3$ congruence subgroup of $\M(S)$ which is torsion-free \cite[Theorem 6.9]{FM}. Hence $C$ is a finite index torsion-free subgroup of $\M(S_\sigma)^0$. Let $\FF$ and $\Vc$ be the families of finite (in this case trivial) and virtually cyclic subgroups of $C$, respectively, equipped with the commensurabilty relation on $\Vc \smallsetminus \FF$. Define $$M:\OVC C \to \Z\mbox{ - mod}: C/H \to (\Z C)^H.$$ The long exact cohomology sequence associated to the push-out of Proposition \ref{lw-main} gives us
$$\prod_{[H]\in I} H^{(4g-5)}_{\FF[H]}(N_C[H], M)\oplus H^{(4g-5)}(C, \Z C)\xrightarrow{i^*} \prod_{[H]\in I} H^{(4g-5)}(N_C[H], \Z C)$$$$\to H^{(4g-4)}_{\Vc}(C, M).$$

By Proposition \ref{cmm}, this reduces to 
$$H^{(4g-5)}(C, \Z C)\xrightarrow{i^*}  \prod_{[H]\in I} H^{(4g-5)}(N_C[H], \Z C)\to H^{(4g-4)}_{\Vc}(C, M).$$
Since $C$ is of type $F$, note that $H^{(4g-5)}(C, \Z C)\ne 0$.   Consider the infinite cyclic subgroups $H_1=\langle T_{\alpha}T_{\beta}\rangle$ and $H_2=\langle T^2_{\alpha}T_{\beta}\rangle$. Since every element of $C$ fixes the curves $\alpha$ and $\beta$, it must commute with both $H_1$ and $H_2$.  Thus, $N_C[H_1]=N_C[H_2]=C$ and $H_1$, $H_2$ represent distinct classes in $I$.  The composition of $i^*$ with the projection of $\prod_{[H]\in I} H^{(4g-5)}(N_C[H], \Z C)$ onto the two factors corresponding to these subgroups is the diagonal map $$\Delta: H^{(4g-5)}(C, \Z C)\to H^{(4g-5)}(C, \Z C)\oplus H^{(4g-5)}(C, \Z C)$$ which is not surjective. Therefore, $i^*$ cannot be surjective and we obtain that $H^{(4g-4)}_{\Vc}(C, M)\ne 0$.
\end{proof}

\vspace{-1cm}
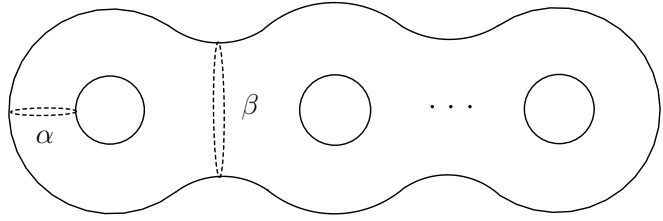
\begin{figure}[!h]

  \begin{minipage}[t]{0.75\linewidth}
    \hspace*{10\linewidth}
    \rule[0cm]{0pt}{0cm}
\scalebox{0.45}{\begin{tikzpicture}[line cap=round,line join=round,>=triangle 45,x=1cm,y=1cm]\clip(-12.096363636363629,-8.842683982683981) rectangle (14.778095238095258,6.430043290043288);\draw [shift={(-8.14,-1.18)},line width=1.2pt]  plot[domain=0.9883083732263253:5.374815274217841,variable=\t]({1*3.017614952242914*cos(\t r)+0*3.017614952242914*sin(\t r)},{0*3.017614952242914*cos(\t r)+1*3.017614952242914*sin(\t r)});\draw [shift={(4.86,-1.14)},line width=1.2pt]  plot[domain=-2.1532842803634678:2.1771792131708594,variable=\t]({1*3.017614952242914*cos(\t r)+0*3.017614952242914*sin(\t r)},{0*3.017614952242914*cos(\t r)+1*3.017614952242914*sin(\t r)});\draw [shift={(-1.64,-1.1)},line width=1.2pt]  plot[domain=0.8850668158886104:2.316215803069055,variable=\t]({1*3.1267874887814173*cos(\t r)+0*3.1267874887814173*sin(\t r)},{0*3.1267874887814173*cos(\t r)+1*3.1267874887814173*sin(\t r)});\draw [shift={(-1.64,-1.1)},line width=1.2pt]  plot[domain=4.027567734442089:5.440073530173172,variable=\t]({1*3.0986448650982896*cos(\t r)+0*3.0986448650982896*sin(\t r)},{0*3.0986448650982896*cos(\t r)+1*3.0986448650982896*sin(\t r)});\draw [shift={(-5.02,3.22)},line width=1.2pt]  plot[domain=4.0520827310763625:5.26926057997655,variable=\t]({1*2.380336110720501*cos(\t r)+0*2.380336110720501*sin(\t r)},{0*2.380336110720501*cos(\t r)+1*2.380336110720501*sin(\t r)});\draw [shift={(-4.9,-5.38)},line width=1.2pt]  plot[domain=0.9658050665645345:2.220810230823389,variable=\t]({1*2.28569464277274*cos(\t r)+0*2.28569464277274*sin(\t r)},{0*2.28569464277274*cos(\t r)+1*2.28569464277274*sin(\t r)});\draw [shift={(-3.8,-1.16)},line width=1.2pt]  plot[domain=4.797651849825218:4.797651849825218,variable=\t]({1*2.3485314560380046*cos(\t r)+0*2.3485314560380046*sin(\t r)},{0*2.3485314560380046*cos(\t r)+1*2.3485314560380046*sin(\t r)});\draw [shift={(1.72,3.64)},line width=1.2pt]  plot[domain=4.175781405742072:5.275775752963978,variable=\t]({1*2.6994073423623934*cos(\t r)+0*2.6994073423623934*sin(\t r)},{0*2.6994073423623934*cos(\t r)+1*2.6994073423623934*sin(\t r)});\draw [shift={(1.66,-5.3)},line width=1.2pt]  plot[domain=0.8168343447537226:2.1519776650533373,variable=\t]({1*2.2497110925627757*cos(\t r)+0*2.2497110925627757*sin(\t r)},{0*2.2497110925627757*cos(\t r)+1*2.2497110925627757*sin(\t r)});\draw [line width=1.2pt] (-8.2,-1.14) circle (1.0031948963187611cm);\draw [line width=1.2pt] (-1.62,-1.14) circle (1.0376897416858306cm);\draw [line width=1.2pt] (4.93,-1.11) circle (1.0198039027185568cm);\draw [rotate around={-89.43222823640274:(-5.019521039774485,-1.130129971810492)},line width=1.2pt,dash pattern=on 3pt off 3pt] (-5.019521039774485,-1.130129971810492) ellipse (1.990713927528157cm and 0.1566582325717562cm);\draw [rotate around={0.6543659282899769:(-10.141226855239825,-1.189277955970762)},line width=1.2pt,dash pattern=on 3pt off 3pt] (-10.141226855239825,-1.189277955970762) ellipse (0.9579748255313298cm and 0.11054077464068775cm);\draw (-4.504155844155833,-0.513679653679655) node[anchor=north west] {\scalebox{2}{${\beta}$}};\draw (-10.516709956709948,-1.5967965367965378) node[anchor=north west] {\scalebox{2}{$\alpha$}};\begin{scriptsize}\draw [fill=black] (1.22,-1.04) circle (1pt);\draw [fill=black] (1.76,-1.04) circle (1pt);\draw [fill=black] (2.32,-1.04) circle (1pt);\end{scriptsize}\end{tikzpicture}}
  \end{minipage}%
\vspace{-1cm}

\caption{The closed surface $S$ of genus $g\geq 2$ and the essential curves $\alpha$, $\beta$ separating it into a pair of pants and a surface of genus $g-1$.}
\label{fig}
\end{figure}

\section*{References}

\begin{biblist}

\bib{AM}{article}{
    AUTHOR = {J.~Aramayona and C.~Mart\'\i nez-P\'erez},
     TITLE = {The proper geometric dimension of the mapping class group},
   JOURNAL = {Algebr. Geom. Topol.},
  FJOURNAL = {Algebraic \& Geometric Topology},
    VOLUME = {14},
      YEAR = {2014},
    NUMBER = {1},
     PAGES = {217--227},
      ISSN = {1472-2747},
   MRCLASS = {20F65 (20J05 57M07 57N05)},
  MRNUMBER = {3158758},
MRREVIEWER = {Sang-hyun Kim},
       DOI = {10.2140/agt.2014.14.217},
       URL = {http://dx.doi.org/10.2140/agt.2014.14.217},
}

    \bib{BB}{article}{
    author={Bartels, B.},
    author={Bestvina, B.},
    title={The Farrell-Jones Conjecture for mapping class groups},
    date={2016},
    note={Preprint arXiv:1606.02844}
    }

\bib{bieri}{book} {
    AUTHOR = {Bieri, Robert},
     TITLE = {Homological dimension of discrete groups},
    SERIES = {Queen Mary College Mathematical Notes},
   EDITION = {Second},
 PUBLISHER = {Queen Mary College, Department of Pure Mathematics, London},
      YEAR = {1981},
     PAGES = {iv+198},
   MRCLASS = {20J05 (18G20 57P10)},
  MRNUMBER = {715779},
}

\bib{BE}{article}{
    AUTHOR = {R.~Bieri and B.~Eckmann},
     TITLE = {Groups with homological duality generalizing {P}oincar\'e
              duality},
   JOURNAL = {Invent. Math.},
  FJOURNAL = {Inventiones Mathematicae},
    VOLUME = {20},
      YEAR = {1973},
     PAGES = {103--124},
      ISSN = {0020-9910},
   MRCLASS = {20J05},
  MRNUMBER = {0340449},
MRREVIEWER = {L. Ribes},
       DOI = {10.1007/BF01404060},
       URL = {http://dx.doi.org/10.1007/BF01404060},
}


\bib{DP}{article}{
    AUTHOR = {D.~Degrijse and N.~Petrosyan},
     TITLE = {Bredon cohomological dimensions for groups acting on {$\rm
              CAT(0)$}-spaces},
   JOURNAL = {Groups Geom. Dyn.},
  FJOURNAL = {Groups, Geometry, and Dynamics},
    VOLUME = {9},
      YEAR = {2015},
    NUMBER = {4},
     PAGES = {1231--1265},
}

\bib{FM}{book}{
    AUTHOR = {B.~Farb and D.~Margalit},
     TITLE = {A primer on mapping class groups},
    SERIES = {Princeton Mathematical Series},
    VOLUME = {49},
 PUBLISHER = {Princeton University Press, Princeton, NJ},
      YEAR = {2012},
     PAGES = {xiv+472},
      ISBN = {978-0-691-14794-9},
   MRCLASS = {57M50 (20F36 20F65 57M07 57N05)},
  MRNUMBER = {2850125},
MRREVIEWER = {Stephen P. Humphries},
}

\bib{fluch}{article}{
    author={M. Fluch}
    title={On Bredon (Co-)Homological Dimensions of Groups}
    year={2010}
    note={Ph.D thesis,arXiv:1009.4633}
    }
    
\bib{fluch11}{article}{
AUTHOR = {Fluch, Martin},
     TITLE = {Classifying spaces with virtually cyclic stabilisers for
              certain infinite cyclic extensions},
   JOURNAL = {J. Pure Appl. Algebra},
  FJOURNAL = {Journal of Pure and Applied Algebra},
    VOLUME = {215},
      YEAR = {2011},
    NUMBER = {10},
     PAGES = {2423--2430},
      ISSN = {0022-4049},
   MRCLASS = {20F65 (55R35)},
  MRNUMBER = {2793946},
MRREVIEWER = {Jaka Smrekar},
       URL = {https://doi.org/10.1016/j.jpaa.2011.01.001},
}

\bib{gw}{article}{
    AUTHOR = {Groves, J. R. J.}
    AUTHOR={Wilson, John S.},
     TITLE = {Soluble groups with a finiteness condition arising from
              {B}redon cohomology},
   JOURNAL = {Bull. Lond. Math. Soc.},
  FJOURNAL = {Bulletin of the London Mathematical Society},
    VOLUME = {45},
      YEAR = {2013},
    NUMBER = {1},
     PAGES = {89--92},
      ISSN = {0024-6093},
   MRCLASS = {20F16 (20J06)},
  MRNUMBER = {3033956},
       URL = {https://doi.org/10.1112/blms/bds065},
}

\bib{harer}{article}{
    AUTHOR = {Harer, J.~L.},
     TITLE = {The virtual cohomological dimension of the mapping class group
              of an orientable surface},
   JOURNAL = {Invent. Math.},
  FJOURNAL = {Inventiones Mathematicae},
    VOLUME = {84},
      YEAR = {1986},
    NUMBER = {1},
     PAGES = {157--176},
      ISSN = {0020-9910},
   MRCLASS = {32G15 (20F38 57N05)},
  MRNUMBER = {830043},
MRREVIEWER = {K. Vogtmann},
       DOI = {10.1007/BF01388737},
       URL = {http://dx.doi.org/10.1007/BF01388737},
}

\bib{ivanov}{book}{
    AUTHOR = {N.~V.~Ivanov},
     TITLE = {Subgroups of {T}eichm\"uller modular groups},
    SERIES = {Translations of Mathematical Monographs},
    VOLUME = {115},
      NOTE = {Translated from the Russian by E. J. F. Primrose and revised
              by the author},
 PUBLISHER = {American Mathematical Society, Providence, RI},
      YEAR = {1992},
     PAGES = {xii+127},
      ISBN = {0-8218-4594-2},
   MRCLASS = {57M50 (20F38 30F60 57N05)},
  MRNUMBER = {1195787},
MRREVIEWER = {Athanase Papadopoulos},
}

  \bib{jpl}{incollection}{
    author={Juan-Pineda, D.},
    author={Leary, I.~J.},
    title={On classifying spaces for the family of virtually cyclic
    subgroups},
    date={2006},
    booktitle={Recent developments in algebraic topology},
    series={Contemp. Math.},
    volume={407},
    publisher={Amer. Math. Soc.},
    address={Providence, RI},
    pages={135\ndash 145},
    review={\MR{MR2248975 (2007d:19001)}},
    }
    
     \bib{JT}{article}{
    author={Juan-Pineda, D.},
    author={Trufilo-Negrete, A.},
    title={On classifying spaces for the family of virtually cyclic
    subgroups in mapping class groups},
    date={2016},
    note={Preprint arXiv:1606.00306}
    }

\bib{lueckbook}{book}{
    AUTHOR = {L\"uck, Wolfgang},
     TITLE = {Transformation groups and algebraic {$K$}-theory},
    SERIES = {Lecture Notes in Mathematics},
    VOLUME = {1408},
      NOTE = {Mathematica Gottingensis},
 PUBLISHER = {Springer-Verlag, Berlin},
      YEAR = {1989},
     PAGES = {xii+443},
      ISBN = {3-540-51846-0},
   MRCLASS = {57Sxx (18F25 19-02 57Q10 57Q12)},
  MRNUMBER = {1027600},
MRREVIEWER = {A. A. Ranicki},
       URL = {https://doi.org/10.1007/BFb0083681},
}

 \bib{luecknewbook}{book}{
    author={L\"uck, W.},
     title={Isomorphism Conjectures in K- and L-Theory},
    date={2015},
    note={book, in preparation, http://131.220.77.52/lueck/data/ic.pdf}
    }

  \bib{lm}{incollection}{
    author={L{\"u}ck, W.},
    author={Meintrup, D.},
    title={On the universal space for group actions with compact
isotropy},
    date={2000},
    booktitle={Geometry and topology: Aarhus (1998)},
    series={Contemp. Math.},
    volume={258},
    publisher={Amer. Math. Soc.},
    address={Providence, RI},
    pages={293\ndash 305},
    review={\MR{MR1778113 (2001e:55023)}},
    }

 \bib{lueckweiermann}{article}{
    author={L{\"u}ck, W.},
    author={Weiermann, M.},
    title={On the classifying space of the family of virtually cyclic
    subgroups},
    date={2012},
    journal={Pure App. Math. Q.},
    volume={8},
    number={2},
    pages={479\ndash 555},
    url={http://arxiv.org/abs/math/0702646v2},
    }

 \bib{martineznucinkis11}{article}{
    author={ C. Martinez-P\'erez and B.E.A. Nucinkis,}
    title= {Bredon cohomological finiteness conditions for generalisations of Thompson groups}
    journal={Groups, Geometry, Dynamics}
    volume={7}
    pages={931--959}
    year={2013}
    }

    \bib{mmn14}{article}{
    AUTHOR = {Mart\'\i nez-P\'erez, Conchita}
    AUTHOR={Matucci, Francesco}
    AUTHOR={Nucinkis, Brita E. A.},
     TITLE = {Cohomological finiteness conditions and centralisers in
              generalisations of {T}hompson's group {$V$}},
   JOURNAL = {Forum Math.},
  FJOURNAL = {Forum Mathematicum},
    VOLUME = {28},
      YEAR = {2016},
    NUMBER = {5},
     PAGES = {909--921},
      ISSN = {0933-7741},
   MRCLASS = {20J05 (20B27 20E22 20F65)},
  MRNUMBER = {3543701},
MRREVIEWER = {D. F. Holt},
       URL = {https://doi.org/10.1515/forum-2014-0176},
}

\bib{mcCarthy}{book}{
    AUTHOR = {J.~D.~McCarthy},
     TITLE = {Subgroups of surface mapping class groups},
      NOTE = {Thesis (Ph.D.)--Columbia University},
 PUBLISHER = {ProQuest LLC, Ann Arbor, MI},
      YEAR = {1983},
     PAGES = {153},
   MRCLASS = {Thesis},
  MRNUMBER = {2633014},
       URL =
              {http://gateway.proquest.com/openurl?url_ver=Z39.88-2004&rft_val_fmt=info:ofi/fmt:kev:mtx:dissertation&res_dat=xri:pqdiss&rft_dat=xri:pqdiss:8327257},
}

\bib{robinson}{book}{
    AUTHOR = {Robinson, Derek J. S.},
     TITLE = {A course in the theory of groups},
    SERIES = {Graduate Texts in Mathematics},
    VOLUME = {80},
   EDITION = {Second},
 PUBLISHER = {Springer-Verlag, New York},
      YEAR = {1996},
     PAGES = {xviii+499},
      ISBN = {0-387-94461-3},
   MRCLASS = {20-01},
  MRNUMBER = {1357169},
       URL = {https://doi.org/10.1007/978-1-4419-8594-1},
}

\bib{vPW1}{article}{
author={T. von Puttkamer and X. Wu}
title={On the finiteness of the classifying space for the family of virtually cyclic subgroups}
year={2016}
note={preprint,arXiv:1607.03790}
}

\bib{vPW2}{article}{
author={T. von Puttkamer and X. Wu}
title={Linear Groups, Conjugacy Growth, and Classifying Spaces for Families of Subgroups}
journal={International Mathematics Research Notices, to appear}
year={2017}
note={	arXiv:1704.05304}
}

\end{biblist}
\end{document}